\newcommand{\dst}{\displaystyle}
\newtheorem{thm}{Theorem}[section]
\newtheorem{lem}[thm]{Lemma}
\newtheorem{cor}[thm]{Corollary}
\newtheorem{defi}[thm]{Definition}
\newtheorem*{A}{Gonchar's Theorem}
\begin{document}

\title[Direct and inverse results]
{Direct and inverse results on row sequences of Hermite-Pad\'e
approximants}

\author[J. Cacoq]{J. Cacoq}
\address{Dpto. de Matem\'aticas\\
Escuela Polit\'ecnica Superior \\
Universidad Carlos III de Madrid \\
Universidad 30, 28911 Legan\'es, Spain} \email{jcacoq@math.uc3m.es}
\thanks{The work of  B. de la Calle
received support from MINCINN under grant MTM2009-14668-C02-02 and
from UPM through Research Group ``Constructive Approximation Theory
and Applications". The work of  J. Cacoq and G. L\'opez was
supported by MINCINN under grant MTM2009-12740-C03-01}

\author[B. de la Calle]{\hspace*{0.1 cm} B. de la Calle
Ysern}\address{Dpto. de Matem\'atica Aplicada\\
E. T. S.  de Ingenieros Industriales\\
Universidad Polit\'ecnica de Madrid\\
Jos\'e G. Abascal 2, 28006 Madrid, Spain}
\email{bcalle@etsii.upm.es}

\author[G. L\'opez ]{G. L\'opez Lagomasino}
\address{Dpto. de Matem\'aticas\\
Escuela Polit\'ecnica Superior \\
Universidad Carlos III de Madrid \\
Universidad 30, 28911 Legan\'es, Spain} \email{lago@math.uc3m.es}

\keywords{Montessus de Ballore Theorem, simultaneous approximation,
Hermite-Pad\'e approximation, inverse results}

\subjclass[2010]{Primary 41A21, 41A28; Secondary 41A25, 41A27}

\dedicatory{Dedicated to A.A. Gonchar, on the occasion of his
eightieth birthday}

\begin{abstract} We give necessary and sufficient
conditions for the convergence with geometric rate of the  common
denominators of simultaneous rational interpolants with a bounded
number of poles. The conditions are expressed in terms of  intrinsic
properties of the system of functions used to build the
approximants. Exact rates of convergence for these denominators and
the simultaneous rational approximants are  provided.
\end{abstract}
\date{\today}
\maketitle


\section{Introduction}
 Let ${\bf f} = (f_1,\ldots,f_d)$ be a system
of $d$ formal or convergent Taylor expansions about the origin; that
is, for each $k=1,\dots,d$, we have
\begin{equation} \label{sistema} f_k(z) = \sum_{n=0}^{\infty} \phi_{n,k} z^n,
\qquad \phi_{n,k} \in {\mathbb{C}}.
\end{equation}
Let  $\mathbf{D}=\left(D_1,\dots,D_d\right)$ be a system of domains
such that, for each $k=1,\dots,d,$ $f_k$ is meromorphic in $D_k$. We
say that the point $\xi$  is a pole of ${\bf f}$ in $\mathbf{D}$ of
order $\tau$ if there exists an index $k\in\{1,\dots,d\}$  such that
$\xi\in D_k$ and it is a pole of $f_k$ of order $\tau$, and for
$j\not = k$ either $\xi$ is a pole of $f_j$ of order less than or
equal to $\tau$ or $\xi \not\in D_j$. When
$\mathbf{D}={(D,\dots,D)}$ we say that  $\xi$ is a pole of ${\bf f}$
in $D$.

 Let $R_0({\bf f})$ be
the largest disk in which all the expansions $f_k, k=1,\ldots,d$
correspond to analytic functions. If $R_0({\bf f}) =0$, we take
$D_{m}({\bf f}) = \emptyset, m \in {\mathbb{Z}}_+$; otherwise,
$R_m({\bf f})$ is the radius of the largest disk $D_{m}({\bf f})$
centered at the origin to which all the analytic elements $(f_k,
D_0(f_k))$ can be extended so that ${\bf f}$ has at most $m$ poles
counting multiplicities. The disk $D_{m}({\bf f})$ constitutes for
systems of functions the analogue of the $m$-th disk of meromorphy
defined by J. Hadamard in \cite{Had} for $d=1$. Moreover, in that
case both definitions coincide.

By $\mathcal{Q}_m(\mathbf{f})$ we denote the monic polynomial whose
zeros are the poles of ${\bf f}$ in $D_{m}({\bf f})$ counting
multiplicities. The set of distinct zeros of
$\mathcal{Q}_m(\mathbf{f})$ is denoted by ${\mathcal{P}}_m({\bf
f})$.

\begin{defi}\label{defsimultaneos}
Let ${\bf f} = (f_1,\ldots,f_d)$ be a system of $d$ formal Taylor
expansions as in \eqref{sistema}. Fix a multi-index ${\bf m} =
({m_1},\ldots,m_d) \in {\mathbb{Z}}_+^d \setminus \{\bf 0\}$ where
${\bf 0}$ denotes the zero vector in ${\mathbb{Z}}_+^d$. Set $|{\bf
m}| = {m_1} +\cdots + m_d$. Then, for each $n \geq \max
\{{m_1},\ldots,m_d\}$, there exist polynomials $Q, P_k,
k=1,\ldots,d,$ such that
\begin{itemize}
\item[a.1)] $\deg P_k \leq n - m_k,\, k=1,\ldots,d,\quad \deg Q
\leq |\mathbf{m}|,\quad Q \not\equiv 0,$
\item[a.2)] $Q(z) f_k(z) - P_k (z) = A_k z^{n+1} + \cdots .$
\end{itemize}
The vector rational function ${\bf R}_{n,{\bf m}} = (P_1 /Q
,\ldots,P_d /Q)$ is called an $(n,{\bf m})$ Hermite-Pad\'e
approximation of ${\bf f}$.
\end{defi}
This vector rational approximation, in general, is not
uniquely determined   and in the sequel we assume that
given $(n,{\bf m})$ one particular solution is taken. For that
solution we write
\begin{equation} \label{incomplete}
{\bf R}_{n,{\bf m}} = (R_{n,{\bf m},1},\ldots,R_{n,{\bf m},d})=
(P_{n,{\bf m},1},\ldots,P_{n,{\bf m},d})/Q_{n,{\bf m}},
\end{equation}
where $Q_{n,{\bf m}}$ has no common zero simultaneously with all the
$P_{n,{\bf m},k}$ and is normalized to be monic unless otherwise
stated. Sequences $\{{\bf R}_{n,\bf m}\}$ for which $|\bf m|$ remains
fixed when $n$
 varies are called row sequences, and when
 $|{\bf m}| = {\mathcal{O}}(n),\, n \to \infty,$ diagonal sequences.

The study of simultaneous Hermite-Pad\'e approximations of systems
of functions has a long tradition (see \cite{Herm}) and they have
been subject to renewed interest in the recent past (see, for
instance, \cite{FL} and the references therein). Many papers deal
with diagonal sequences and their applications in different fields (number theory, random matrices, brownian motions, Toda lattices, to
name a few). At the same time, few papers study row sequences. In this
second direction a  significant contribution is due to
Graves-Morris/Saff in \cite{GS} where they prove an analogue of the
Montessus de Ballore theorem which plays a
central role in the classical theory of Pad\'e approximation. See also \cite{GS2}-\cite{GS3} for
different approaches to the same type of results as well as \cite{sidi} and references therein for least-squares versions.

Before going into details let us briefly describe the
scalar case ($d=1$) corresponding to classical Pad\'e
approximation which is well
understood. When $d=1$ we write $\textbf{f}=f,\,\textbf{m}=m\in
\mathbb{N},$ and ${\bf R}_{n,{\bf m}}=R_{n,m}$. Given a compact set
${K} \subset {\mathbb{C}}$, $\|\cdot\|_{K}$ denotes the sup norm on
$K$. We summarize what we need in the following statement.

\begin{A} \label{A}
 Let $f$ be a formal Taylor expansion about the origin and fix $m \in
\mathbb{N}$. Then, the following two assertions are equivalent.
\begin{itemize}
\item[a)] $R_0(f) > 0$ and $f$ has exactly $m$ poles in $D_m(f)$
counting multiplicities.
\item[b)] There is a polynomial $Q_m$ of degree $m,\, Q_m(0) \neq 0,$  such
that the sequence of denominators $\{Q_{n,m}\}_{ n \geq m}$ of the
Pad\'e approximations of $f$ satisfies
$$
\limsup_{n \to \infty} \|Q_m - Q_{n,m}\|^{1/n} = \theta < 1,
$$
where $\|\cdot\|$ denotes the coefficient norm in the space of
polynomials.
\end{itemize}
Moreover, if either a) or b) takes place then
$Q_m\equiv\mathcal{Q}_m(f)$,
\begin{equation} \label{eq:6a}
\theta=\frac{\max \{|\xi|: \xi \in {\mathcal{P}}_m(f)\}}{R_m(f)},
\end{equation}
and
\begin{equation}\label{eq:6}
\limsup_{n \to \infty}
\|f - R_{n,m}\|_{K}^{1/n} = \frac{\|z\|_ {K}}{R_{m}(f)},
\end{equation}
where ${K}$ is any compact subset of $D_m(f) \setminus
{\mathcal{P}}_m(f)$.
\end{A}
From this result it follows that if $\xi$ is a pole of $f$ in
$D_m(f)$ of order $\tau$, then for each $\varepsilon
>0$, there exists $n_0$ such that for $n \geq n_0$, $Q_{n,m}$ has
exactly $\tau$ zeros in $\{z: |z - \xi |< \varepsilon\}$. We say
that each pole of $f$ in $D_m(f)$ attracts as many zeros of
$Q_{n,m}$ as its order when $n$ tends to infinity.

So stated Gonchar's Theorem does not appear in the literature and
needs some comments. Under assumptions a), in \cite{Mon} Montessus
de Ballore proved that
\[ \lim_{n \to \infty}  Q_{n,m} = \mathcal{Q}_m(f),
\qquad \lim_{n\to \infty} R_{n,m} = f,
\]
with uniform convergence on compact subsets of $D_m(f) \setminus
{\mathcal{P}}_m(f)$ in the second limit. In essence, Montessus
proved that a) implies b) with $Q_m = \mathcal{Q}_m(f)$, showed that
$\theta \leq {\max \{|\xi|: \xi \in {\mathcal{P}}_m(f)\}}/{R_m(f)}$,
and proved \eqref{eq:6} with equality  replaced by $\leq$. These are
the so called direct statements of the theorem. The inverse
statements, b) implies a), $\theta \geq {\max \{|\xi|: \xi \in
{\mathcal{P}}_m(f)\}}/{R_m(f)}$, and the inequality $\geq$ in
\eqref{eq:6} are immediate consequences of \cite[Theorem 1]{gon2}.
The study of inverse problems of Pad\'e approximation was suggested
by A.A. Gonchar in \cite[Subsection 12]{gon2} where he presented
some interesting conjectures. Some of them were solved in
\cite{sue1} and \cite{sue2}. See \cite{apt} for a brief account of
Gonchar's most recent results and a list of his publications.

In \cite{GS}, Graves-Morris and Saff proved an analogue of the
direct part of Gonchar's Theorem for simultaneous approximation with
the aid of the concept of polewise independence of a system of
functions (for the definition, see \cite{GS}). They also established
upper bounds for the convergence rates corresponding to
\eqref{eq:6a} and \eqref{eq:6}. The Graves-Morris/Saff Theorem was
refined and complemented in \cite[Theorem 4.4]{cacoq} by weakening
the assumption of polewise independence, improving the upper bound
given in \cite{GS} for the rate \eqref{eq:6a}, and giving the exact
one for \eqref{eq:6}. Until now, results of inverse type for row
sequences of Hermite-Pad\'e approximants are not available.

Our purpose is to obtain an analogue of Gonchar's
Theorem for simultaneous Hermite-Pad\'e approximants, characterizing
the exact rates of convergence of the $Q_{n,\mathbf{m}}$ and
$\mathbf{R}_{n,\mathbf{m}}$.

The underlying idea in inverse-type results is that a polynomial
which is the limit of the denominators of the approximants must have as
zeros the poles of the function being approximated, provided
that the rate of convergence is geometric. However, the actual
situation in simultaneous approximation may be rather complicated as
the following example shows. Take ${\bf f} = (f_1,f_2)$, where
\begin{equation}\label{sistema1}
f_1 = \frac{1}{1-2z}+\sum_{n = 0}^\infty z^{n!} + \frac{1}{z-2},
\qquad f_2 = \frac{1}{1-2z}+\sum_{n = 0}^\infty z^{n!},
\end{equation}
and ${\bf m} = (1,1)$. It is clear that the unit circle is a natural
boundary of definition for both functions $f_1$ and $f_2$ and thus
$z=2$ cannot be a pole of ${\bf f}$ in any system of domains.
However, results contained in \cite{cacoq} show that the denominators
$Q_{n,\mathbf{m}}$ of the simultaneous Hermite-Pad\'e approximants
converge with geometric rate to the polynomial $(z-1/2)(z-2)$.

This kind of examples leads us to
introduce the following concept which is actually inspired by the
definition of polewise independence in \cite{GS}.

For each $r>0$, set $D_r=\{z\in\mathbb{C}\,:\,|z|<r\}$,
$\Gamma_r=\{z\in\mathbb{C}\,:\,|z|=r\}$, and
$\overline{D}_r=\{z\in\mathbb{C}\,:\,|z|\le r\}$.

\begin{defi} \label{e-pole}
Given ${\bf f} = (f_1,\ldots,f_d)$ and ${\bf m} = (m_1,\ldots,m_d) \in \mathbb{Z}^d \setminus \{{\bf 0}\}$ we say that $\xi\in\mathbb{C}\setminus\{0\}$ is a system pole of
order $\tau$ of  ${\bf f}$ with respect to $\bf m$ if
for each $s=1,\dots,\tau$ there exists at least one polynomial
combination of the form
\begin{equation}\label{combination}
\sum_{k=1}^d p_k f_k,\quad \deg p_k<m_k,\quad k=1,\dots,d,
\end{equation}
which is analytic on a neighborhood of $\overline{D}_{|\xi|}$ except
for a pole at $z=\xi$ of exact order $s$ and there is no
polynomial combination of the form \eqref{combination}  with those
properties for $s$ greater than $\tau$. If some component $m_k$
equals zero the corresponding polynomial $p_k$ is taken identically
equal to zero.
\end{defi}

The great advantage of this definition with respect to that of
polewise independence is that we have liberated it from establishing
a priori a region where the property should be verified. This turns
out to be crucial.

We wish to underline that if some component $m_k$ equals zero, that
component places no restriction on Definition \ref{defsimultaneos}
and does not report any benefit in finding system poles; therefore,
without loss of  generality we can restrict our attention to
multi-indices ${\bf m} \in \mathbb{N}^d$, and we will do so in the
sequel, except in reference to  the convergence of the approximants
themselves.

Notice that the definition of  system pole strongly depends on the
multi-index $\mathbf{m}$ and that a system $\mathbf{f}$ cannot have
more than $|\mathbf{m}|$ system poles with respect to $\mathbf{m}$
counting their order. During the proof of Theorem \ref{reciprocal}
below, carried out in Section \ref{simultaneos}, we give a procedure
for finding in a finite number of steps all the system poles of
$\mathbf{f}$ with respect to a multi-index $\mathbf{m}$ under
appropriate conditions.

It is easy to see that a system pole may not be a pole of ${\bf f}$
or viceversa. For example,
let ${\bf f}$ be the system given by \eqref{sistema1} and ${\bf m}
= (1,1)$. The point $z=2$, which lies beyond the natural
boundary of definition of  $f_1$ and $f_2$ is not a pole; however it is a system pole of $\mathbf{f}$ since $f_1 -f_2$ has a pole at $z=2$.

 On the other hand, take ${\bf f} = (f_1,f_2)$
with
\[ f_1 = \frac{1}{z-1} + \frac{1}{z-2} ,
\qquad f_2 =  \frac{1}{z-3},
\]
and ${\bf m} = (1,1)$. Then the points $z=1$ and $z=3$ are poles and
 system poles of ${\bf f}$ but $z=2$ is only a pole because there is no way
of eliminating the pole at $z=1$ through linear combinations of
$f_1$ and $f_2$ without eliminating the pole at $z=2$.

To each system pole $\xi$ of $\mathbf{f}$ with respect to
$\mathbf{m}$ we associate several characteristic values. Let $\tau$ be
the order
of $\xi$ as a system pole of $\mathbf{f}$. For each $s=1,\dots,\tau$
denote by $r_{\xi,s}(\mathbf{f},\mathbf{m})$ the largest of all the
numbers $R_s(g)$ (the radius of the largest disk containing at most
$s$ poles of $g$), where $g$ is a polynomial combination of type
\eqref{combination} that is analytic on a neighborhood of
$\overline{D}_{|\xi|}$ except for a pole at $z=\xi$ of order $s$.
Then
$$
\begin{array}{c}
\dst R_{\xi,s}(\mathbf{f},\mathbf{m})=\min_{k=1,\dots,s} r_{\xi,
{k}}(\mathbf{f},\mathbf{m}),
\\ \\ \dst
R_\xi(\mathbf{f},\mathbf{m})=
R_{\xi,\tau}(\mathbf{f},\mathbf{m})=\min_{s=1,\dots,\tau}
r_{\xi,s}(\mathbf{f},\mathbf{m}).
\end{array}
$$

Obviously, if $d=1$ and $({\bf f,\bf m}) = (f,m)$, system poles and
poles in $D_m(f)$ coincide. Also,
$R_\xi(\mathbf{f},\mathbf{m})=R_{m}(f)$ for each
 pole $\xi$ of $f$ in $D_{m}(f)$.

 By $\mathcal{Q}_{|\mathbf{m}|}(\mathbf{f},\mathbf{m})$ we denote the
monic polynomial whose zeros are the system poles of ${\bf f}$ with
respect to $\mathbf{m}$ taking account of their order. The set of
distinct zeros of
$\mathcal{Q}_{|\mathbf{m}|}(\mathbf{f},\mathbf{m})$ is denoted by
${\mathcal{P}}_{|\mathbf{m}|}({\mathbf f},\mathbf{m})$.

The following theorem constitutes our main result.

\begin{thm} \label{reciprocal} Let ${\bf f}$ be a system
of formal Taylor expansions as in \eqref{sistema} and fix a
multi-index $\mathbf{m}\in\mathbb{N}^d $. Then, the following two
assertions are equivalent.

\begin{itemize}
\item[a)]
$R_0(\mathbf{f}) > 0$ and ${\bf f}$ has exactly $|{\bf m}|$ system
poles with respect to $\mathbf{m}$ counting multiplicities.

\item[b)] The sequence of
denominators $\{Q_{n,{\bf m}}\}_{n \geq |{\bf m}|}$ of simultaneous
Pad\'e approximations of ${\bf f}$ is uniquely determined for all
sufficiently large $n$ and there exists a polynomial
$Q_{|\mathbf{m}|}$ of degree $|\mathbf{m}|,\, Q_{|\mathbf{m}|}(0) \neq
0,$ such that
$$
\limsup_{n \to \infty} \|Q_{|{\bf m}|} - Q_{n,{\bf m}}\|^{1/n} =
\theta < 1.
$$
\end{itemize}
Moreover, if either ${\rm a})$ or ${\rm b})$ takes place then
$Q_{|\mathbf{m}|}\equiv\mathcal{Q}_{|\mathbf{m}|}(\mathbf{f},\mathbf{m})$
and
\begin{equation} \label{eq:6b}
\theta=\max\left\{\frac {|\xi|}
{R_{\xi}(\mathbf{f},\mathbf{m})}\,:\, \xi \in
{\mathcal{P}}_{|\mathbf{m}|}(\mathbf{f},\mathbf{m}) \right\}.
\end{equation}
\end{thm}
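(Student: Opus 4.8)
The plan is to establish the equivalence of a) and b) as a chain of implications, treating the direct part (a implies b) and the inverse part (b implies a) separately, since they require genuinely different techniques. For the direct part, I would follow the circle of ideas used in \cite{GS} and \cite{cacoq}: assuming $R_0(\mathbf{f})>0$ and that $\mathbf{f}$ has exactly $|\mathbf{m}|$ system poles, I would construct, for each system pole $\xi$ of order $\tau$ and each $s=1,\dots,\tau$, an explicit polynomial combination $\sum_k p_k f_k$ analytic past $\overline{D}_{|\xi|}$ with a pole of exact order $s$ at $\xi$. The key analytic tool here is an integral representation for the error $Q_{n,\mathbf{m}} f_k - P_{n,\mathbf{m},k}$ via a Hermite-type contour integral on a circle $\Gamma_r$ with $r$ just inside $R_\xi(\mathbf{f},\mathbf{m})$. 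Combining these error formulas over $k$ against the polynomials $p_k$ cancels the analytic part and isolates the behaviour of $Q_{n,\mathbf{m}}$ near $\xi$; estimating the resulting integral gives the geometric rate $\limsup_n \|Q_{|\mathbf{m}|}-Q_{n,\mathbf{m}}\|^{1/n}\le\theta$ with $\theta$ as in \eqref{eq:6b}, and simultaneously forces the zeros of $Q_{n,\mathbf{m}}$ to cluster at the system poles with the correct multiplicities, establishing uniqueness of $Q_{n,\mathbf{m}}$ for large $n$ and the identification $Q_{|\mathbf{m}|}\equiv\mathcal{Q}_{|\mathbf{m}|}(\mathbf{f},\mathbf{m})$.

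For the inverse part, b) implies a), I would argue by contradiction and by a counting/extraction scheme. Assume the denominators converge geometrically to a monic $Q_{|\mathbf{m}|}$ of degree $|\mathbf{m}|$ with $Q_{|\mathbf{m}|}(0)\neq 0$. The first task is to show each zero $\xi$ of $Q_{|\mathbf{m}|}$, of multiplicity $\tau_\xi$, is a system pole of order at least $\tau_\xi$; since $\sum_\xi \tau_\xi=|\mathbf{m}|$ and a system cannot have more than $|\mathbf{m}|$ system poles counting order, equality in the total count then yields that the order is exactly $\tau_\xi$ and that there are no other system poles, giving the identification of $Q_{|\mathbf{m}|}$. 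To produce the required polynomial combinations I would use the geometric convergence $Q_{n,\mathbf{m}}\to Q_{|\mathbf{m}|}$ to pass to the limit in the linearized residual relations a.2): writing the residuals $Q_{n,\mathbf{m}} f_k - P_{n,\mathbf{m},k}=A_{n,k}z^{n+1}+\cdots$, I would extract from the coefficient data suitable fixed polynomials $p_k$ with $\deg p_k<m_k$ such that $\sum_k p_k f_k$ extends meromorphically past $\overline{D}_{|\xi|}$ with a pole of the prescribed order at $\xi$, the analyticity region being dictated by the rate $\theta<1$. This is where the finite-step procedure alluded to after the statement enters: one organizes the poles by increasing modulus and peels them off one at a time, at each stage using the already-extracted combinations to certify the order of the next system pole.

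The main obstacle I expect is the inverse direction, specifically producing the polynomial combinations of type \eqref{combination} with control on both the exact order of the pole at $\xi$ and the size of the analyticity disk, so that the characteristic radii $r_{\xi,s}(\mathbf{f},\mathbf{m})$ and hence $R_\xi(\mathbf{f},\mathbf{m})$ come out correctly in \eqref{eq:6b}. Unlike the scalar case, where a single denominator directly recovers the poles, here the information is distributed among the $d$ components and one must disentangle which linear combinations see which poles; the example \eqref{sistema1} shows a system pole can lie beyond the natural boundaries of the individual $f_k$, so the extraction cannot be done componentwise and must exploit cancellation. Making the passage to the limit rigorous -- ensuring the extracted $p_k$ are genuinely nonzero and give a pole of \emph{exact} order $s$ rather than lower order, and that the rate matches $\theta$ from both sides -- is the delicate technical heart of the argument.

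I would close by verifying the equality \eqref{eq:6b} in both directions: the direct part yields $\theta\le\max\{|\xi|/R_\xi(\mathbf{f},\mathbf{m})\}$ from the error estimates, while the inverse part yields the reverse inequality by showing that if the rate were strictly smaller, the corresponding combination would extend analytically to a larger disk, contradicting the definition of $R_\xi(\mathbf{f},\mathbf{m})$ as the optimal radius. Together these give the exact rate and complete the equivalence.
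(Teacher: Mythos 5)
Your outline of the direct part a)$\Rightarrow$b) matches the paper's strategy (Hermite-type contour integrals over $\Gamma_r$ against the optimal combinations $g_s$, an induction on $s$ giving $\limsup_n|q_{n,\mathbf m}^{(j)}(\xi)|^{1/n}\le|\xi|/R_{\xi,j+1}(\mathbf f,\mathbf m)$, and then reassembling $q_{n,\mathbf m}$ by Hermite interpolation at the zeros of $\mathcal Q_{|\mathbf m|}(\mathbf f,\mathbf m)$), and your plan for proving exactness of the rate by contradiction is essentially the paper's: the optimal combination must carry a non-system-pole singularity on the boundary of $D_s(g)$, so $Q_{|\mathbf m|}g$ has radius of convergence exactly $R_\zeta(\mathbf f,\mathbf m)$ and the Cauchy coefficient estimate forces $\theta'\ge\theta$. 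One small correction there: uniqueness of $Q_{n,\mathbf m}$ for large $n$ does not come from zero clustering; it comes from the fact that every solution must have degree exactly $|\mathbf m|$, so two non-collinear solutions would produce a solution of smaller degree.

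The genuine gap is in the inverse direction. Your proposed mechanism --- ``pass to the limit in the linearized residual relations a.2) and extract from the coefficient data suitable fixed polynomials $p_k$'' --- is not a workable step as stated: the relations $Q_{n,\mathbf m}f_k-P_{n,\mathbf m,k}=A_{n,k}z^{n+1}+\cdots$ only constrain finitely many Taylor coefficients for each $n$, and geometric convergence of $Q_{n,\mathbf m}$ alone gives you no direct handle on where the combinations $\sum_kp_kf_k$ extend analytically or what the exact orders of their poles are. What actually makes the inverse part work in the paper is the inverse theory of \emph{incomplete} Pad\'e approximants developed in Section \ref{incompletos}, none of which appears in your plan. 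Concretely: (i) one first reduces to the associated system $\overline{\mathbf f}$ with $\overline{\mathbf m}=(1,\dots,1)$, so that polynomial combinations become constant-coefficient combinations and the uniqueness hypothesis in b) yields algebraic independence (Lemma \ref{lemuni}), which guarantees the combinations built later are not polynomials; (ii) the engine for discovering each new layer of candidate poles is the dichotomy of Theorem \ref{inverso}: for any constant combination $g=\sum_kc_k\bar f_k$, the function $\sum_kc_kP_{n,\mathbf m,k}/Q_{n,\mathbf m}$ is an incomplete Pad\'e approximant of type $(n,|\mathbf m|,1)$ of $g$, so either $D_1(g)$ contains exactly one pole of $g$ and it is a zero of $Q_{|\mathbf m|}$, or $R_0(Q_{|\mathbf m|}g)>R_1(g)$; this is what certifies that the boundary of each region $D_0(\mathbf g_\alpha)$ carries only poles, all zeros of $Q_{|\mathbf m|}$, and drives the finite induction until the count reaches $|\mathbf m|$; and (iii) to certify that every candidate is a system pole of the full order one argues by redundancy: if some candidate failed, one of the residue equations \eqref{ecuacion} would be dependent on the others, producing an extra nontrivial combination $g$ analytic on a neighborhood of $\overline{D_0(\mathbf g_\alpha)}$, and applying Theorem \ref{inverso} to $g$ contradicts the fact that all zeros of $Q_{|\mathbf m|}$ already lie in $\overline{D_0(\mathbf g_\alpha)}$. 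You correctly identified this direction as the delicate heart of the argument, but the idea that resolves it --- the incomplete-Pad\'e dichotomy together with the reduction to $(1,\dots,1)$ and the redundancy argument --- is missing from the proposal.
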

If $d=1$,  $R_{n,m}$ and $Q_{n,m}$ are uniquely determined.
Therefore,  Theorem \ref{reciprocal} implies Gonchar's Theorem except
for \eqref{eq:6}
whose analogue will be presented in Section 3.2
to avoid introducing new notation at this stage.

The paper is structured as follows. In Section \ref{incompletos} we
continue with the study of incomplete Pad\'e approximants
initiated in \cite{cacoq} proving results of inverse type.
Section \ref{simultaneos} is dedicated to the proof of Theorem
\ref{reciprocal} and the analogue of \eqref{eq:6}.


\section{Incomplete Pad\'e approximants}\label{incompletos}

Let
\begin{equation} \label{i1} f(z) = \sum_{n=0}^{\infty} \phi_n z^n,
\qquad \phi_n \in {\mathbb{C}},
\end{equation}
denote a formal or convergent Taylor expansion about the origin.

\begin{defi}\label{defincompletos}
Let $f$  denote a formal Taylor expansion as in \eqref{i1}. Fix
$m\ge m^*\ge 1$. Let $n \geq m$. We say that the rational function
$r_{n,m}$ is an incomplete Pad\'e approximation of type $(n,m,{m^*})
$ corresponding to $f$ if $r_{n,m}$ is the quotient of any two
polynomials $p $ and $q $ that verify
\begin{itemize}
\item[b.1)]
$\deg p \le n-{m^*},\quad \deg q \le m,\quad q \not\equiv 0,$
\item[b.2)] $q(z) f(z)-p(z)=
A z^{n+1}+ \cdots .$
\end{itemize}
\end{defi}

Notice that given $(n,m,{m^*}), n \geq m \geq {m^*},$ any  of the
Pad\'e approximants $R_{n,{m^*}},\ldots,R_{n,m}$ can be regarded
an incomplete Pad\'e approximation of type $(n,m,{m^*})$ of $f$.
From Definition \ref{defsimultaneos} and
(\ref{incomplete}) it follows that $R_{n,{\bf m},k}, k=1,\ldots,d,$
is an incomplete Pad\'e approximation of type $(n,|\mathbf{m}|,m_k)$
with respect to $f_k$.

In the sequel, for each $n \geq m \geq {m^*},$ we choose one
candidate. After canceling out common factors between $q$ and $p$,
we write $ r_{n,m} = p_{n,m}/q_{n,m}, $ where, additionally,
$q_{n,m}$ is normalized to be monic. Suppose that $q$ and $p$ have a
common zero at $z=0$ of order $\lambda_n$. From b.1)-b.2) it follows
that
\begin{itemize}
\item[b.3)] $ \deg p_{n,m} \leq n-m^*-\lambda_n, \quad
\deg q_{n,m} \leq m-\lambda_n, \quad q_{n,m} \not\equiv 0,$
\item[b.4)] $q_{n,m}(z) f(z)-p_{n,m}(z)=
A z^{n+1-\lambda_n}+ \cdots .$
\end{itemize}
where $A$ is, in general, a different constant from the one in b.2).

The first difficulty encountered in dealing with inverse-type
results is to justify in terms of the data that the formal series
corresponds to an analytic element which does not reduce to a
polynomial. In our aid comes the next result, which provides such
information in terms of whether the zeros of the polynomials
$q_{n,m}$ remain away or not from $0$ and/or  $\infty$ as $n$ grows.
Let
\[ {\mathcal{P}}_{n,m} = \{\zeta_{n,1},\ldots,\zeta_{n,m_n}\},
\quad n\ge m, \quad m_n \leq m,
\]
denote the  collection of zeros of $q_{n,m}$ repeated according to
their multiplicity, where $\deg q_{n,m}=m_n$. Put
$$
S=\sup_{N\ge m}\inf\left\{|\zeta_{n,k}|: n \geq N, m_n \geq 1, 1
\leq k \leq m_n\right\}
$$
and
$$
G=\inf_{N\ge m}\sup\left\{|\zeta_{n,k}|: n \geq N, m_n \geq 1, 1
\leq k \leq m_n\right\}.
$$
Finally, set
$$
\tau_n = \min\{n-m^*-\lambda_n- \deg p_{n,m}, m -\lambda_n-
m_n\},\quad n\ge m.
$$
From b.3) we know that $\tau_n\ge 0,\, n\ge m$.
\begin{thm} \label{teo;6} Let $f$ be a formal power series as in \eqref{i1}.
Fix $m \geq m^* \geq 1$. The following assertions hold.
\begin{itemize}
\item[i)] If  $|\lambda_n - \lambda_{n-1}| \leq m^* -1, n \geq n_0,$
and $S>0$ then $R_0(f) > 0$.
\item[ii)] If $|(m_n + \lambda_n+\tau_n) -
(m_{n-1}+\lambda_{n-1}+ \tau_{n-1})| \leq m^* -1, n\geq n_0$, and
$G<\infty$ then either $f$ is a polynomial or $R_0(f) < \infty$. If,
additionally, there exists a sequence of indices $\Lambda$ such that
$\deg q_{n,m} \geq 1, n \in \Lambda,$ then $R_0(f) < \infty$.
\end{itemize}
\end{thm}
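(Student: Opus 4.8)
The plan is to read condition b.4) as a family of linear recurrences for the Taylor coefficients $\phi_j$ of $f$, and to control the growth (part i) and the decay (part ii) of $\{\phi_j\}$ through them. Write $q_{n,m}(z)=\sum_{i=0}^{m_n}c_{n,i}z^i$, so that $c_{n,m_n}=1$ and $c_{n,0}=q_{n,m}(0)=(-1)^{m_n}\prod_{k=1}^{m_n}\zeta_{n,k}$. Since $\deg p_{n,m}\le n-m^{*}-\lambda_n$ by b.3) and the remainder in b.4) has order at least $n+1-\lambda_n$, the coefficient of $z^{j}$ in the product $q_{n,m}f$ must vanish for every index $j$ with $\deg p_{n,m}<j\le n-\lambda_n$; equivalently
\[
\sum_{i=0}^{m_n}c_{n,i}\,\phi_{j-i}=0 ,\qquad j\in I_n:=\{\deg p_{n,m}+1,\dots,n-\lambda_n\}.
\]
Here $I_n$ is a block of $m^{*}+\tau_n'$ consecutive integers, where $\tau_n'=n-m^{*}-\lambda_n-\deg p_{n,m}\ge\tau_n\ge0$. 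Note also that b.3) gives $0\le\lambda_n\le m$ and $0\le m_n\le m$, so both sequences are bounded.

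For part i) I would solve each relation for its top coefficient; it suffices to use the sub-block of $I_n$ of length $m^{*}$. Since $S>0$, there are $N$ and $S'>0$ with $|\zeta_{n,k}|\ge S'$ for all $n\ge N$, whence $c_{n,0}\ne0$ and, writing $c_{n,i}/c_{n,0}$ as the coefficients of $\prod_k(1-z/\zeta_{n,k})$, one gets $\sum_i|c_{n,i}/c_{n,0}|\le(1+1/S')^{m}=:C$ uniformly. Thus $|\phi_j|\le C\max_{1\le i\le m}|\phi_{j-i}|$ for every $j$ lying in some such block, $n\ge N$. The condition $|\lambda_n-\lambda_{n-1}|\le m^{*}-1$ guarantees $L_n\le U_{n-1}+1$ for the endpoints $U_n=n-\lambda_n$ and $L_n=n-m^{*}-\lambda_n+1$; since $U_n\to\infty$ (because $\lambda_n$ is bounded), an elementary induction shows that $\bigcup_{n\ge n_0}$ of these blocks contains a whole tail $\{j\ge j_0\}$. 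Chaining the bound along this tail gives $\max_{j_0\le i\le j}|\phi_i|\le \text{const}\cdot\max(1,C)^{\,j-j_0}$, hence $\limsup_j|\phi_j|^{1/j}\le\max(1,C)<\infty$ and $R_0(f)>0$.

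Part ii) is dual: I would solve each relation for its bottom coefficient, $\phi_{j-m_n}=-\sum_{i=0}^{m_n-1}c_{n,i}\phi_{j-i}$, which is legitimate because $c_{n,m_n}=1$. Now $G<\infty$ provides $N$ and $G'$ with $|\zeta_{n,k}|\le G'$ for $n\ge N$, so all $|c_{n,i}|$ are bounded by some $C'$. Reindexing by $l=j-m_n$ yields $|\phi_l|\le mC'\max_{1\le t\le m}|\phi_{l+t}|$ for $l$ in the block $J_n=I_n-m_n$, whose upper end is $n-\lambda_n-m_n\to\infty$ and whose length is $m^{*}+\tau_n'$. The no-gap condition between $J_{n-1}$ and $J_n$ reads $(m_{n-1}+\lambda_{n-1})-(m_n+\lambda_n)\le m^{*}+\tau_n'-1$, and this follows from the hypothesis $|(m_n+\lambda_n+\tau_n)-(m_{n-1}+\lambda_{n-1}+\tau_{n-1})|\le m^{*}-1$ because $\tau_{n-1}\ge0$ and $\tau_n\le\tau_n'$; hence $\bigcup_n J_n$ again covers a tail $\{l\ge l_0\}$. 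Setting $M_l=\max_{0\le t<m}|\phi_{l+t}|$, the bound gives $M_l\le K M_{l+1}$ with $K=\max(1,mC')$, so $M_l\ge M_{l_0}K^{-(l-l_0)}$. If $f$ is not a polynomial then $M_{l_0}>0$ for every $l_0$, forcing $\limsup_j|\phi_j|^{1/j}\ge 1/K>0$, i.e. $R_0(f)<\infty$. Finally, for the additional clause I would observe that if $f$ were a polynomial then, for all large $n$, a nonzero remainder in b.4) would have degree at least $n+1-\lambda_n$, which for large $n$ exceeds $\deg(q_{n,m}f-p_{n,m})$; hence the remainder vanishes, $q_{n,m}f=p_{n,m}$, and coprimality of $q_{n,m}$ and $p_{n,m}$ forces $\deg q_{n,m}=0$. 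Thus $\deg q_{n,m}\ge1$ for $n\in\Lambda$ excludes the polynomial alternative and leaves $R_0(f)<\infty$.

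The main obstacle I anticipate is the bookkeeping of the covering (tiling) step: one must pin down the exact endpoints of the blocks $I_n$ (resp. $J_n$), check that their upper ends tend to infinity, and verify that the stated hypotheses on $\lambda_n$ (resp. on $m_n+\lambda_n+\tau_n$) are precisely what prevents gaps between consecutive blocks. In particular, the appearance of $\tau_n$ in part ii) is dictated by the need to use the enlarged block length $m^{*}+\tau_n'$ to absorb the jumps of $m_n+\lambda_n$; matching $\tau_n=\min(\tau_n',\tau_n'')$ in the hypothesis with $\tau_n'$ in the block length is the delicate point.
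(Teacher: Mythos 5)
Your proposal is correct and follows essentially the same strategy as the paper's proof: read b.3)--b.4) as the vanishing of a window of Taylor coefficients of $q_{n,m}f$, bound the resulting recurrences uniformly via the elementary symmetric functions of the zeros (the Vieta estimates), use the hypotheses on $\lambda_n$ and on $m_n+\lambda_n+\tau_n$ to make consecutive windows contiguous or overlapping so that they tile a tail of the integers, and then chain the inequalities forwards or backwards. The only divergence is in part ii), where the paper pads the denominator with the auxiliary factor $(z-1)^{\tau_n}$ so that the $m^*$-length window of bottom indices is positioned exactly by $m_n+\lambda_n+\tau_n$, whereas you keep $q_{n,m}$ unpadded and let the longer window of length $m^*+\tau_n'$ (with $\tau_n'\ge\tau_n$) absorb the jumps; both work, and your observation that $\tau_{n-1}\ge 0$ together with $\tau_n\le\tau_n'$ yields the no-gap condition is precisely where the two bookkeepings reconcile.
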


\begin{proof}
From definition
\begin{equation} \label{eq:1} (q_{n,m} f - p_{n,m})(z) = A z^{n+1-\lambda_n}
+ \cdots,
\end{equation}
and $q_{n,m}(0) \neq 0$.

We may suppose that $\inf\left\{|\zeta_{n,k}|: n \geq n_0, m_n \geq
1, 1 \leq k \leq m_n\right\}> 0$ and $|\lambda_n - \lambda_{n-1}|
\leq m^* -1,\, n\ge n_0$. Normalize $q_{n,m}$ as follows. If $m_n
\geq 1$ take
\[ q_{n,m}(z) = \prod_{k=1}^{m_n} \left( 1 - \frac{z}{\zeta_{n,k}} \right) =
a_{n,0}+ a_{n,1}z + \cdots + a_{n,m_n} z^{m_n},\quad a_{n,0} =1.
\]
Otherwise $q_{n,m}(z) \equiv 1 = a_{n,0}$.

Using the Vieta formulas connecting the coefficients of a polynomial
and its zeros it follows that there exists $C_1\ge 1$ such that
\begin{equation} \label{eq:3}
\sup \left\{|a_{n,k}| : 0\leq k\leq m_n,\, n \geq n_0\right\}\le
C_1< \infty.
\end{equation}
The coefficient corresponding to $z^k, k\in \{n-m^*-\lambda_n +1,
\ldots,n - \lambda_n\}$ in the left-hand side of
(\ref{eq:1}) equals
\begin{equation} \label{eq:2} \phi_k + a_{n,1} \phi_{k-1} + \cdots +
a_{n,m_n} \phi_{k-m_n} = 0,
\end{equation}
since $\deg p_{n,m} \leq n  -m^*- \lambda_n$.

If $m_n \geq 1$, \eqref{eq:3} and \eqref{eq:2} imply that
\[ |\phi_k| \leq C_1(|\phi_{k-1}| + \cdots + |\phi_{k-m_n}|).
\]
Therefore, for each $k \in \{n -m^* -\lambda_n +1,\ldots,n - \lambda_n\}$
there exists $k' \in
\{k-1,\ldots,k-m\}$ ($m_n \leq m$) such that
\begin{equation} \label{eq:a} |\phi_k|\leq C_1 m |\phi_{k'}|.
\end{equation}
Should $m_n = 0$, for the same values of $k$, we have $\phi_k = 0$
and \eqref{eq:a} is trivially verified. Substituting $n$ by $n-1$,
we deduce that for each $k \in \{n-m^*- \lambda_{n-1} ,\ldots,n -
\lambda_{n-1} -1\}$ there exists $k' \in \{k-1,\ldots,k-m\}$ such
that
\begin{equation} \label{eq:b} |\phi_k|\leq C_1 m |\phi_{k'}|.
\end{equation}

As $n \geq n_0$, we have $$ n-\lambda_{n-1}  \geq  n-\lambda_n
-m^*+1 $$ and $$ n-\lambda_{n-1} -m^* \leq n-\lambda_n - 1, $$
because $|\lambda_n - \lambda_{n-1}| \leq m^* -1$. Consequently, the
range of values taken by $k$ due to relations \eqref{eq:a} and
\eqref{eq:b} are either contiguous or overlapping for $n \geq n_0$.
Since $n- \lambda_n$ tends to $\infty$ as $n$ goes to $\infty$, we
conclude that for all $n \geq n_0$ there exists $n' \in
\{n-1,\ldots,n-m\}$ such that
\begin{equation} \label{eq:c} |\phi_n|\leq C_1 m |\phi_{n'}|.
\end{equation}

Let $\Lambda$ be a sequence of indices such that
\[ \lim_{n \in \Lambda} |\phi_n|^{1/n} = \limsup_{n \to \infty}
|\phi_n|^{1/n} = 1/R_0(f).
\]
Choose $n \in \Lambda$. Due to \eqref{eq:c} there exist indices
$n_1 > n_2> \cdots
> n_{r_n},\, n_{r_n}\leq n_0,$ where $r_n \leq n-n_0$,
such that
\[ |\phi_n| \leq C_1m|\phi_{n_1}| \leq \cdots \leq (C_1m)^{r_n}|
\phi_{n_{r_n}}|.
\]
Consequently,
\[1/R_0(f) = \lim_{n \in \Lambda} |\phi_n|^{1/n} \leq
\limsup_{n \to \infty} (C_1m)^{r_n/n} \leq C_1m.
\]
Therefore, $R_0(f) \geq (C_1m)^{-1} > 0$, which proves i).

As for ii), assume that  $\sup\{|\zeta_{n,k}|: n \geq n_0, m_n \geq
1, 1 \leq k \leq m_n\}< \infty$ and $|(m_n + \lambda_n +\tau_n) -
(m_{n-1}+ \lambda_{n-1}+\tau_{n-1})| \leq m^* -1,\, n\ge n_0$. Set
$t_n(z) = (z-1)^{\tau_n}$. Define $\tilde{q}_{n,m} = t_n q_{n,m}$
and $\tilde{p}_{n,m} = t_n p_{n,m}$. Normalize $\tilde{q}_{n,m}$ as
follows. If $m_n + \tau_n \geq 1$ take
\[ \tilde{q}_{n,m}(z) = \prod_{k=1}^{m_n + \tau_n}
\left( z-  {\zeta_{n,k}} \right) =
 b_{n,0}z^{m_n+\tau_n} + \cdots + b_{n,m_n +\tau_n-1}z +  b_{n,m_n +\tau_n},
\]
where $b_{n,0} =1.$ Should $m_n + \tau_n =0$ we set $\tilde{q}_{n,m}
\equiv 1 = b_{n,0}$. Using the Vieta formulas, it follows that there
exists $C_2\ge 1$ such that
\begin{equation} \label{eq:4} \sup \left\{|b_{n,k}| :
0\leq k\leq m_n,\, n \geq n_0\right\}\le
C_2< \infty.
\end{equation}
The coefficient corresponding to $z^k, k\in \{n-m^*-\lambda_n +1,
\ldots,n - \lambda_n\}$, in the left-hand side of
(\ref{eq:1}) equals
\begin{equation} \label{eq:5}  \phi_{k -m_n -\tau_n} + b_{n,1}
\phi_{k -m_n -\tau_n+1} + \cdots + b_{n,m_n +\tau_n} \phi_{k}  = 0,
\end{equation}
since $\deg \tilde{p}_{n,m} \leq n-m^*-\lambda_n $.

Should $m_n + \tau_n \geq 1$, \eqref{eq:4} and \eqref{eq:5} imply that
\[ |\phi_{k-m_n -\tau_n}| \leq C_2(|\phi_{k-m_n -\tau_n+1}| + \cdots +
|\phi_{k}|),
\]
or, what is the same, for each $k\in \{n-m^*-\lambda_n - m_n -\tau_n
+1, \ldots,n - \lambda_n - m_n-\tau_n\}$, we have
\[ |\phi_{k }| \leq C_2(|\phi_{k  +1}| + \cdots + |\phi_{k + m_n+\tau_n}|).
\]
Therefore, for  each $k\in \{n-m^* -\lambda_n - m_n -\tau_n+1,\ldots,n -
\lambda_n - m_n-\tau_n\}$ there exists $k' \in
\{k+1,\ldots,k+m\}$ $(m_n + \tau_n \leq m)$ such that
\begin{equation} \label{eq:7} |\phi_{k'}| \geq \frac{|\phi_{k}|}{C_2m}.
\end{equation}
In case that $m_n + \tau_n = 0$ we have
 $\phi_k = 0$ for the same values of $k$ and \eqref{eq:7} is also true.

Using the assumption that $|\lambda_n + m_n +\tau_n- \lambda_{n-1} -
m_{n-1}-\tau_{n-1}| \leq m^* -1$, it is easy to check, similarly to
the previous case, that the range of values taken by the parameter
$k$ for consecutive values of $n$ are either contiguous or
overlapping. Also, $n - \lambda_n - m_n -\tau_n$ tends to $\infty$
as $n$ goes to $\infty$. Consequently, from \eqref{eq:7} we have
that for all $n \geq n_0$ there exists $n' \in \{n+1,\ldots,n+m\}$
such that
\begin{equation} \label{eq:d}
|\phi_{n'}| \geq \frac{|\phi_{n}|}{C_2m}
\end{equation}

Using \eqref{eq:d} we can find an increasing sequence of multi-indices
$\{n_{s}\}_{s \in {\mathbb{Z}}_+}$, $n_{s+1} \in \{n_{s}+1,
\ldots,n_{s} +m\} $ and $n_1 \in \{n_0,\ldots,n_0 +
m\}$ such that
\[ |\phi_{n_{s+1}}| \geq \frac{|\phi_{n_1}|}{(C_2m)^{s}}.
\]
Should $f$ be a polynomial there is nothing to prove. Otherwise,
changing the value of $n_0$ if necessary, without loss of generality
we can assume that $\phi_{n_1} \neq 0$. Then,
\[\liminf_{s\to \infty} |\phi_{n_{s+1}}|^{1/n_{s+1}} \geq
\frac{1}{\dst\limsup_{s \to \infty} (C_2m)^{s/n_{s+1}}} \geq
\frac{1}{C_2m},
\]
since
\[ \limsup_{s \to \infty} \frac{s}{n_{s+1}} \leq
\limsup_{s \to \infty} \frac{s}{n_1 + s} = 1.
\]
It follows that
\[  R_0(f) = \frac{1}{\dst\limsup_{n \to \infty}|\phi_n|^{1/n}}
\leq \frac{1}{\dst\liminf_{s \to \infty}|\phi_{n_{s+1}}|^{1/n_{s+1}} }
\leq C_2m < \infty,
\]
as we needed to prove.

Finally, if $f$ is a polynomial,  say of degree $N$, we would have
that for all $n \geq N+m,$ $f \equiv p_{n,m}/q_{n,m}$ and $q_{n,m}
\equiv 1$. Consequently, if there exists $\Lambda$ such that $\deg
q_{n,m} \geq 1, n \in \Lambda,$ $f$ cannot be a polynomial and,
therefore, only $R_0(f) < \infty$ is possible.
\end{proof}

\begin{lem} \label{lem:sufi} A sufficient condition to have
$| \lambda_n - \lambda_{n-1}| \leq m^* -1$ and $|(m_n + \lambda_n+\tau_n) -
(m_{n-1}+\lambda_{n-1}+\tau_{n-1})| \leq m^* -1$   is that
$$
\min\left\{m_n + \tau_n,\, m_{n-1} + \tau_{n-1}\right\} \geq m - m^*
+1.
$$
\end{lem}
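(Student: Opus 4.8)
The plan is to prove both inequalities by showing that each of the two target expressions is confined to an interval of length at most $m^*-1$; once that is established, the bound on consecutive increments is automatic. The decisive observation is that $\tau_n$ is defined as a \emph{minimum} whose second entry is $m-\lambda_n-m_n$, so that $\tau_n\le m-\lambda_n-m_n$ and hence
\[
m_n+\lambda_n+\tau_n\le m,\qquad n\ge m.
\]
The complementary lower bound comes directly from the hypothesis, which asserts $m_n+\tau_n\ge m-m^*+1$ at both $n$ and $n-1$ (this is exactly why it is phrased as a minimum over the two indices).

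First I would dispose of the second inequality. Using $\lambda_n\ge0$ together with the two bounds just recorded gives the chain
\[
m-m^*+1\le m_n+\tau_n\le m_n+\lambda_n+\tau_n\le m,
\]
and the identical chain holds with $n$ replaced by $n-1$. Thus both $m_n+\lambda_n+\tau_n$ and $m_{n-1}+\lambda_{n-1}+\tau_{n-1}$ lie in the interval $[m-m^*+1,\,m]$, whose length is $m^*-1$, so their difference has absolute value at most $m^*-1$.

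For the first inequality I would instead isolate $\lambda_n$. Combining $m_n+\lambda_n+\tau_n\le m$ with $m_n+\tau_n\ge m-m^*+1$ yields
\[
\lambda_n\le m-(m_n+\tau_n)\le m^*-1,
\]
and since $\lambda_n\ge0$ this puts $\lambda_n$, and likewise $\lambda_{n-1}$, in $[0,\,m^*-1]$; their difference is therefore bounded by $m^*-1$ as well. There is no genuine obstacle in this argument: it is pure interval arithmetic, and the only points demanding a little care are to notice that $\tau_n$ being the smaller of the two slacks forces $m_n+\lambda_n+\tau_n\le m$, to apply the hypothesis at both indices, and to recall the nonnegativity of $\lambda_n$, $m_n$ and $\tau_n$ (the last having been noted just before the statement).
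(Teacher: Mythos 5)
Your proof is correct and follows essentially the same route as the paper: both arguments rest on the bound $m_k+\lambda_k+\tau_k\le m$ (from $\tau_k$ being at most the slack $m-\lambda_k-m_k$) combined with the hypothesis $m_k+\tau_k\ge m-m^*+1$, placing $\lambda_k$ in $[0,m^*-1]$ and $m_k+\lambda_k+\tau_k$ in $[m-m^*+1,m]$ for $k=n-1,n$. The paper phrases the second step as a shift by $m-m^*-1$ rather than as interval membership, but that is only a cosmetic difference.
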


\begin{proof} In fact, for
$k=n-1$ and $k=n$, if $m_k +\tau_k \geq m-m^* +1$ then $0 \leq
\lambda_k \leq m^*-1$ because $\lambda_k + m_k +\tau_k\leq m$  and
the first inequality readily follows. On the other hand,
$$
\begin{array}{c}
|(m_n + \lambda_n+\tau_n) - (m_{n-1}+\lambda_{n-1}+\tau_{n-1})|  \\
\\ = |(m_n + \lambda_n +\tau_n- m + m^* -1) - (m_{n-1}+\lambda_{n-1}+
\tau_{n-1}- m + m^* -1)|
\end{array}
$$
and $0 \leq  m_k +\lambda_k +\tau_k- m + m^* -1\leq
m^* -1$ for $k=n-1$ and $k=n$. Therefore, the second inequality also
holds.
\end{proof}

Applied to Pad\'e approximation $(m^* = m)$, Theorem \ref{teo;6} and
Lemma \ref{lem:sufi} imply that if $\deg Q_{n,m} \geq 1$
and its zeros remain uniformly bounded away
from $0$ and $\infty$, for sufficiently large $n$,
then $0 < R_0(f) < \infty$. This result has
not been stated elsewhere.

Let us see some consequences of Theorem \ref{teo;6} and Lemma
\ref{lem:sufi} on the extendability of a formal power series
and the location of some of its poles in terms of the behavior
of the zeros of the approximants. First we bring your attention
to some results from \cite{cacoq}.

Let $B$ be a subset of the complex plane $\mathbb{C}$. By
$\mathcal{U}(B)$ we denote the class of all coverings of $B$ by at
most a numerable set of disks. Set
$$
\sigma(B)=\inf\left\{\sum_{i=1}^\infty
|U_i|\,:\,\{U_i\}\in\mathcal{U}(B)\right\},
$$
where $|U_i|$ stands for the radius of the disk $U_i$. The quantity
$\sigma(B)$ is called the $1$-dimensional Hausdorff content of the
set $B$.

Let $\{\varphi_n\}_{n\in\mathbb{N}}$ be a sequence of functions
defined on a domain $D\subset\mathbb{C}$ and $\varphi$ another
function defined on $D$. We say that
$\{\varphi_n\}_{n\in\mathbb{N}}$ converges in $\sigma$-content to
the function $\varphi$ on compact subsets of $D$ if for each compact
subset $K$ of $D$ and for each $\varepsilon
>0$, we have
$$
\lim_{n\to\infty} \sigma\{z\in K :
|\varphi_n(z)-\varphi(z)|>\varepsilon\}=0.
$$
We denote this writing $\sigma$-$\lim_{n\to \infty} \varphi_n =
\varphi$ inside $D$.

We define the number $R^*_m(f)$ as the radius of the largest disk
centered at the origin on compact subsets of which the sequence
$\{r_{n,m}\}_{n \geq m}$ converges to $f$ in $\sigma$-content. In
\cite{cacoq} we gave a formula to produce this number and showed
that it depends on the specific sequence of incomplete Pad\'e
approximants considered. Set $D^*_m(f)=\left\{z\in\mathbb{C} :
|z|<R^*_m(f)\right\}$.

 Among other direct-type results, we proved that
\begin{equation}\label{incluido}
R_{m^*}(f)\le R^*_m(f)\le R_{m}(f),
\end{equation}
that $R_m^*(f) >0$ implies $R_0(f) > 0$, and that each pole of the
function $f$ in $D^*_{m}(f)$
attracts, with geometric rate, at least as many zeros of $q_{n,m}$
as its order (see \cite[Theorem 3.5]{cacoq}). Therefore, Theorem \ref{teo;6} and Lemma
\ref{lem:sufi} imply

\begin{cor} \label{cor;1} Let $f$ be a formal power series
as in \eqref{i1}. Fix $m \geq m^* \geq 1$. Assume that there exists
a polynomial ${q}_m$ of degree greater than or equal to
 $ m-m^*+1,\,{q}_m(0) \neq 0,$ such that $\lim_{n
\to \infty} {q}_{n,m} = {q}_m$. Then $0<R_0(f)<\infty$ and the zeros
of ${q}_{m}$ contain all the poles, counting multiplicities, that
$f$ has in $D^*_{m}(f)$.
\end{cor}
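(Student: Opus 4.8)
The plan is to assemble Theorem \ref{teo;6}, Lemma \ref{lem:sufi}, and the pole-attraction result of \cite{cacoq}, after first extracting the structural consequences of the hypothesis $\lim_n q_{n,m}=q_m$. Write $D:=\deg q_m\ge m-m^*+1\ge 1$. Since each $q_{n,m}$ is monic of degree $m_n\le m$ and convergence takes place in the coefficient norm, a coefficient-by-coefficient comparison shows that $q_m$ must itself be monic and that $m_n=D$ for all sufficiently large $n$: if $m_n>D$ infinitely often the coefficient of $z^{m_n}$ would be $1$ while that of $q_m$ is $0$, and if $m_n<D$ infinitely often the coefficient of $z^{D}$ would be $0$ while that of $q_m$ is nonzero, both contradicting the convergence. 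Because the roots of a monic polynomial depend continuously on its coefficients, the zeros of $q_{n,m}$ converge, as a multiset, to those of $q_m$. In particular, since $q_m(0)\neq 0$, these zeros stay uniformly bounded away from both $0$ and $\infty$, so $S>0$ and $G<\infty$ in the notation preceding Theorem \ref{teo;6}.

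Next I would verify the remaining hypotheses of Theorem \ref{teo;6} through Lemma \ref{lem:sufi}. As $\tau_n\ge 0$ always, for all large $n$ we have $m_n+\tau_n\ge m_n=D\ge m-m^*+1$, and the same for $n-1$; hence $\min\{m_n+\tau_n,\,m_{n-1}+\tau_{n-1}\}\ge m-m^*+1$, and Lemma \ref{lem:sufi} yields both $|\lambda_n-\lambda_{n-1}|\le m^*-1$ and $|(m_n+\lambda_n+\tau_n)-(m_{n-1}+\lambda_{n-1}+\tau_{n-1})|\le m^*-1$ for all large $n$. Combined with $S>0$, part i) of Theorem \ref{teo;6} gives $R_0(f)>0$; combined with $G<\infty$ and the fact that $\deg q_{n,m}=D\ge 1$ for all large $n$ (so the extra subsequence condition holds with $\Lambda$ equal to all sufficiently large indices), part ii) gives $R_0(f)<\infty$. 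This establishes $0<R_0(f)<\infty$.

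For the statement about poles I would invoke \cite[Theorem 3.5]{cacoq}: every pole $\xi$ of $f$ in $D^*_m(f)$ of order $t$ attracts at least $t$ zeros of $q_{n,m}$, meaning that for each small $\varepsilon>0$ and all large $n$ the disk $\{|z-\xi|<\varepsilon\}$ contains at least $t$ zeros of $q_{n,m}$. Note that by \eqref{incluido} one has $R^*_m(f)\ge R_{m^*}(f)\ge R_0(f)>0$, so $D^*_m(f)$ is a genuine disk. Since $q_{n,m}\to q_m$ with $\deg q_{n,m}=D$ constant, Hurwitz's theorem shows that whenever $\varepsilon$ is chosen so that no zero of $q_m$ lies on the circle $\{|z-\xi|=\varepsilon\}$, the number of zeros of $q_{n,m}$ inside that circle equals, for large $n$, the number of zeros of $q_m$ inside. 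Hence $q_m$ has at least $t$ zeros in $\{|z-\xi|<\varepsilon\}$ for all such small $\varepsilon$, and letting $\varepsilon\to 0$ forces $\xi$ to be a zero of $q_m$ of multiplicity at least $t$. As this holds for every pole of $f$ in $D^*_m(f)$, the zeros of $q_m$ contain all such poles counting multiplicities.

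The argument is a synthesis of already-established facts rather than a source of new difficulty; the step demanding the most care is the bookkeeping by which the single convergence hypothesis $q_{n,m}\to q_m$ simultaneously forces degree stabilization ($m_n=D$) and control of the zeros near both $0$ and $\infty$. These are exactly the inputs ($S>0$, $G<\infty$, $m_n+\tau_n\ge m-m^*+1$) that unlock \emph{both} halves of Theorem \ref{teo;6} at once, and it is their joint availability, rather than any one of them, that makes the sharp conclusion $0<R_0(f)<\infty$ possible.
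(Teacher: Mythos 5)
Your proof is correct and follows exactly the route the paper intends: the paper states Corollary \ref{cor;1} as an immediate consequence of Theorem \ref{teo;6}, Lemma \ref{lem:sufi}, and the pole-attraction result of \cite[Theorem 3.5]{cacoq}, and your write-up simply supplies the (sound) bookkeeping — degree stabilization $m_n=D$, hence $S>0$, $G<\infty$, and $m_n+\tau_n\ge m-m^*+1$ — that the paper leaves implicit.
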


We need a relaxed version of Corollary \ref{cor;1}  for
the proof of Theorem \ref{reciprocal}.

\begin{lem} \label{rmayor0} Let $f$ be a formal power series
as in \eqref{i1} that is not a polynomial. Fix $m \geq m^* \geq 1$.
Let $r_{n,m}=\tilde{p}_{n,m}/\tilde{q}_{n,m}$ be an incomplete
Pad\'e approximant of type $(n,m,m^*)$ corresponding to $f$, where
$\tilde{p}_{n,m}$ and $\tilde{q}_{n,m}$ are obtained from Definition
\ref{defincompletos} and common factors between them are allowed.
Assume that there exists a polynomial $\tilde{q}_m$ of degree
 $ m,\, \tilde{q}_m(0) \neq 0,$ such that $\lim_{n
\to \infty} \tilde{q}_{n,m} = \tilde{q}_m$. Then $0<R_0(f)<\infty$
and the zeros of $\tilde{q}_{m}$ contain all the poles, counting
multiplicities, that $f$ has in $D^*_{m}(f)$.
\end{lem}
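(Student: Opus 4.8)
The plan is to reproduce the argument behind Corollary \ref{cor;1}, but with the \emph{non-reduced} denominators $\tilde q_{n,m}$ fed directly into Theorem \ref{teo;6}. The first step is a bookkeeping observation: since $\tilde q_{n,m}\to\tilde q_m$ with $\deg\tilde q_m=m$ and $\tilde q_m(0)\neq 0$, for all sufficiently large $n$ the top coefficient of $\tilde q_{n,m}$ stays bounded away from $0$, so $\deg\tilde q_{n,m}=m$ and $\tilde q_{n,m}(0)\neq 0$; moreover the $m$ zeros of $\tilde q_{n,m}$ can accumulate only at the $m$ zeros of $\tilde q_m$, all of them finite and nonzero. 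Consequently these zeros remain uniformly bounded, and uniformly bounded away from the origin, as $n\to\infty$.

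Next I apply the reasoning of Theorem \ref{teo;6} to the pair $(\tilde q_{n,m},\tilde p_{n,m})$ itself, without cancelling common factors. This is admissible because the proof of that theorem uses only the degree bounds and interpolation conditions b.1)--b.4), together with the nonvanishing of the denominator at the origin, all of which hold for $(\tilde q_{n,m},\tilde p_{n,m})$ once $n$ is large (the normalization used there is a harmless rescaling). For this pair the order of the common zero at the origin is $\lambda_n=0$ and the degree of the denominator is $m_n=m$, whence $\tau_n=\min\{n-m^*-\deg\tilde p_{n,m},\,0\}=0$. Thus $\lambda_n\equiv 0$ and $m_n+\lambda_n+\tau_n\equiv m$ are eventually constant, so the two difference hypotheses of Theorem \ref{teo;6} hold trivially (equivalently, $m_n+\tau_n=m\ge m-m^*+1$, which is the sufficient condition of Lemma \ref{lem:sufi}). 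With $S>0$ and $G<\infty$ supplied by the boundedness of the zeros of $\tilde q_{n,m}$ from the first step, part i) gives $R_0(f)>0$, while part ii), together with the hypothesis that $f$ is not a polynomial, gives $R_0(f)<\infty$. Hence $0<R_0(f)<\infty$.

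It remains to locate the poles. As a rational function, $r_{n,m}=\tilde p_{n,m}/\tilde q_{n,m}$ coincides with the reduced quotient $p_{n,m}/q_{n,m}$, so both $R_m^*(f)$ and the $\sigma$-content convergence on $D_m^*(f)$ are unaffected by the cancellation of common factors. By \cite[Theorem 3.5]{cacoq}, every pole $\xi$ of $f$ in $D_m^*(f)$ of order $\tau$ attracts, with geometric rate, at least $\tau$ zeros of the reduced denominator $q_{n,m}$. Since the zeros of $q_{n,m}$ form a subset of those of $\tilde q_{n,m}$, the polynomial $\tilde q_{n,m}$ has at least $\tau$ zeros in every sufficiently small disk about $\xi$ for large $n$. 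Because $\tilde q_{n,m}\to\tilde q_m$ with both polynomials of degree $m$, Hurwitz's theorem shows that the number of zeros of $\tilde q_{n,m}$ in such a disk equals the multiplicity of $\xi$ as a zero of $\tilde q_m$ once $n$ is large; hence that multiplicity is at least $\tau$. Summing over the finitely many distinct poles of $f$ in $D_m^*(f)$ then shows that the zeros of $\tilde q_m$ contain all of them, counting multiplicity, as claimed.

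The step demanding the most care is the justification that the non-reduced pair may replace the reduced one in Theorem \ref{teo;6}: one must check that its proof nowhere exploits full reduction beyond b.1)--b.4) and $q_{n,m}(0)\neq 0$, so that the assignment $\lambda_n=0$, $m_n=m$, $\tau_n=0$ is legitimate and produces constant, hence admissible, difference sequences. The second delicate point is the transfer of the attraction property from the zeros of the reduced $q_{n,m}$ to $\tilde q_m$; this rests on the inclusion of the two zero sets and on the Hurwitz count, where one must verify that the multiplicities accumulated at the distinct poles add up consistently and cannot exceed $\deg\tilde q_m=m$.
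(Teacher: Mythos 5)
Your proof is correct and follows essentially the same route as the paper: both arguments reduce the claim $0<R_0(f)<\infty$ to Theorem \ref{teo;6} by showing that $\lambda_n$ and $m_n+\lambda_n+\tau_n$ are eventually constant (the paper verifies the sufficient condition of Lemma \ref{lem:sufi} for the reduced pair, obtaining $\tau_n=m-m_n$ and hence $m_n+\tau_n=m$, while you feed the unreduced pair in directly with $\lambda_n=0$, $m_n=m$, $\tau_n=0$ --- equivalent bookkeeping, since the proof of Theorem \ref{teo;6} never uses coprimality), and both obtain the statement about the poles from the attraction property of \cite[Theorem 3.5]{cacoq}. Your Hurwitz/zero-counting argument merely spells out the final step that the paper leaves implicit.
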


\begin{proof}
Let us show that the assumptions of Lemma \ref{lem:sufi} are
verified for the incomplete approximant $r_{n,m}$. Let
$r_{n,m}={p}_{n,m}/{q}_{n,m}$, where the polynomials $p_{n,m}$ and
$q_{n,m}$ are relatively prime. Since $\tilde{q}_m(0) \neq 0$, then
$\tilde{q}_{n,m}(0) \neq 0,\, n \geq n_0$. Thus, $\tilde{p}_{n,m}$
and $\tilde{q}_{n,m}$ do not have a common zero at $z=0$ and
$\lambda_n=0$ for all $n \geq n_0$. As before, set $m_{n} = \deg
q_{n,m}$ and
\[ \tau_{n} = \min\left\{n-m^* - \deg p_{n,m},\, m
- m_{n}\right\}, \qquad n \ge n_0.
\]
Notice that $\tau_{n}= m-m_n,\, n\ge n_0,$ because the polynomials
$q_{n,m}$ and $p_{n,m}$ are obtained eliminating possible common
factors between $\tilde{q}_{n,m}$ and $\tilde{p}_{n,m}$ and by
assumption
\[ \min\left\{n-m^* - \deg \tilde{p}_{n,m},\, m
- \deg \tilde{q}_{n,m}\right\} = 0, \qquad n \geq n_0.
\]
Therefore, we have
\[ m_{n} + \tau_{n} =m \geq m - m^* +1, \qquad n
\geq n_0,
\]
and   Lemma \ref{lem:sufi} is applicable.

From  Theorem \ref{teo;6} we obtain $0<R_0(f)<\infty$. Now, from the
fact that each pole of $f$ in $D^*_m(f)$ attracts as many zeros of
$q_{n,m}$ as its order it follows that the zeros of $\tilde{q}_m$
contain all the poles, counting multiplicities, that $f$ has in
$D^*_m(f)$.
\end{proof}

In case that there exists $R > R_{m^*}(f)$ inside of which $f$ is
meromorphic then $D_R$ contains at least $m^* +1$ poles of $f$ since
$D_{m^*}(f)$ is the largest disk where $f$ is meromorphic with at
most $m^*$ poles. We can prove the following inverse-type result.

\begin{thm}\label{inverso}
Fix $m \geq m^* \geq 1$. Let $f$ be a formal power series as in
\eqref{i1} that is not a rational function with at most $m^*-1$
poles. Let $r_{n,m}=\tilde{p}_{n,m}/\tilde{q}_{n,m}$ be an
incomplete Pad\'e approximant of type $(n,m,m^*)$ corresponding to
$f$, where $\tilde{p}_{n,m}$ and $\tilde{q}_{n,m}$ are obtained from
Definition \ref{defincompletos} and common factors between them are
allowed. Suppose that there exists a polynomial $\tilde{q}_m,$ of
degree $m,\, \tilde{q}_m(0) \neq 0,$ such that
\begin{equation}\label{coro}
\limsup_{n\to\infty}\|\tilde{q}_{n,m}-\tilde{q}_m\|^{1/n}=\theta<1.
\end{equation}
Then, either $f$ has exactly $m^*$ poles in $D_{m^*}(f)$, which are
zeros of $\tilde{q}_m$ counting multiplicities, or $R_0(\tilde{q}_m
f)> R_{m^*}(f)$.
\end{thm}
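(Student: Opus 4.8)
The plan is to prove the dichotomy by showing that if the second alternative fails then the first must hold. Since \eqref{coro} entails $\lim_{n\to\infty}\tilde q_{n,m}=\tilde q_m$, Lemma \ref{rmayor0} already yields $0<R_0(f)<\infty$ and that every pole of $f$ in $D^*_m(f)$ is a zero of $\tilde q_m$ counting multiplicities. Writing $\rho=R_{m^*}(f)$ and recalling $\rho\le R^*_m(f)$ from \eqref{incluido}, the poles of $f$ in $D_\rho=D_{m^*}(f)$ are among the zeros of $\tilde q_m$; let $j\le m^*$ be their number and let $w$ be the monic polynomial whose zeros are exactly these poles, with their multiplicities. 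Then $w\mid\tilde q_m$, so $\tilde q_m f$ is analytic in $D_\rho$ and $R_0(\tilde q_m f)\ge\rho$; consequently the failure of the second alternative means precisely $R_0(\tilde q_m f)=\rho$. Under this assumption it remains to prove $j=m^*$, after which the first alternative holds because the $j$ poles are already known to be zeros of $\tilde q_m$.

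First I would isolate a coefficient estimate valid for any series $g$ with $0<R_0(g)<\infty$ that admits incomplete Pad\'e approximants $\hat p_n/\hat q_n$ of type $(n,m,\mu)$ with $\mu\ge 1$ whose denominators satisfy $\limsup_n\|\hat q_n-Q\|^{1/n}=\theta<1$ for some $Q$ of degree $m$ with $Q(0)\ne 0$: namely $R_0(Qg)\ge R_0(g)/\theta$. The argument is the mechanism already used in the proof of Theorem \ref{teo;6}. The defining relation forces the Taylor coefficients of $\hat q_n g$ to vanish in the $\mu$ positions $n-\mu+1,\dots,n$, so for $k$ in that range the coefficient of $z^k$ in $Qg$ equals $\sum_{i}(Q_i-(\hat q_n)_i)\,g_{k-i}$. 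Bounding $|Q_i-(\hat q_n)_i|$ by the geometric rate and $|g_l|$ by $(1/R_0(g)+\varepsilon)^l$, and letting each large $k$ play the role of $n$ (admissible because, as $\mu\ge 1$, the ranges for consecutive $n$ overlap), gives $\limsup_k|[z^k](Qg)|^{1/k}\le \theta/R_0(g)$, which is the claim.

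The heart of the proof is a reduction to $F=wf$. Suppose, for contradiction, that $j<m^*$. Since $w$ cancels exactly the poles of $f$ in $D_\rho$, the function $F=wf$ is analytic in $D_\rho$, and in fact $R_0(F)=\rho$: were $R_0(F)>\rho$, then $f=F/w$ would be meromorphic in a larger disk with at most $j\le m^*$ poles, contradicting the maximality defining $\rho=R_{m^*}(f)$. Multiplying the defining relation $\tilde q_{n,m}f-\tilde p_{n,m}=O(z^{n+1})$ by $w$ yields $\tilde q_{n,m}F-w\tilde p_{n,m}=O(z^{n+1})$ with $\deg(w\tilde p_{n,m})\le j+(n-m^*)=n-(m^*-j)$; hence, by Definition \ref{defincompletos}, the pairs $(\tilde q_{n,m},w\tilde p_{n,m})$ are incomplete Pad\'e approximants of $F$ of type $(n,m,m^*-j)$ with $m^*-j\ge 1$ and with the same denominators converging geometrically to $\tilde q_m$. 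If $R_0(F)=\infty$, then $F$ is entire and $f=F/w$ is rational with at most $j\le m^*-1$ poles, contradicting the hypothesis. Otherwise $0<R_0(F)<\infty$ and the coefficient estimate applies to $F$, giving $R_0(\tilde q_m F)\ge R_0(F)/\theta=\rho/\theta>\rho$. But $\tilde q_m F=w\,(\tilde q_m f)$, and the zeros of $w$ lie in $D_\rho$, so $w$ does not vanish on $\Gamma_\rho$; since $R_0(\tilde q_m f)=\rho$, the nearest singularity of $\tilde q_m f$ lies on $\Gamma_\rho$ and is not removed by the factor $w$ (as $1/w$ is analytic there), whence $R_0(\tilde q_m F)=R_0(\tilde q_m f)=\rho$. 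This contradicts $\rho/\theta>\rho$, and therefore $j=m^*$.

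The routine ingredient is the coefficient estimate, which merely reruns the computation of Theorem \ref{teo;6}. The step I expect to be the real obstacle is the reinterpretation: recognizing that multiplying the approximation relation by the pole-capturing polynomial $w$ turns the approximants of $f$ into approximants of $F=wf$ of the \emph{lower} type $(n,m,m^*-j)$, so that the $m^*-j\ge 1$ ``unused'' interpolation conditions force, through the estimate, an analytic continuation of $\tilde q_m F$ beyond $\Gamma_\rho$ that the structure of $F$ forbids, because $F$ carries a singularity on $\Gamma_\rho$ that $w$ cannot cancel. Keeping the degree bookkeeping exact and correctly disposing of the borderline case $R_0(F)=\infty$—which is exactly where the hypothesis that $f$ is not rational with at most $m^*-1$ poles is consumed—are the delicate points to handle with care.
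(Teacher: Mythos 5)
Your overall strategy is the paper's own: annihilate the poles of $f$ in $D_{m^*}(f)$ with a polynomial of degree at most $m^*-1$ (the paper uses a $q_{m^*}$ with $\deg q_{m^*}<m^*$; you use the exact pole polynomial $w$ of degree $j$), observe that the resulting pairs are incomplete Pad\'e approximants of the new function of a type whose second parameter still leaves at least one interpolation condition free, and use that condition together with \eqref{coro} to force an impossible improvement of the radius of convergence of $\tilde q_m F$. Your coefficient estimate is the same computation the paper performs with a Cauchy integral over $\Gamma_r$, the degree bookkeeping $\deg(w\tilde p_{n,m})\le n-(m^*-j)$ is correct, and the argument is sound up to and including the contradiction obtained when $R_0(F)<\infty$.

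The gap is your disposal of the case $R_0(F)=\infty$, which is exactly the case $\rho=R_{m^*}(f)=\infty$. You assert that then ``$F$ is entire and $f=F/w$ is rational with at most $j\le m^*-1$ poles.'' That implication is false: an entire function divided by a polynomial is meromorphic in $\mathbb{C}$ with finitely many poles but need not be rational (take $F=e^z$, $w=z-1$). So the hypothesis that $f$ is not a rational function with at most $m^*-1$ poles is not contradicted, and this branch is left open; it is not a vacuous branch, since nothing established earlier excludes $R_{m^*}(f)=\infty$. This is precisely where the paper invests its remaining effort: it shows that the type-$(n,m,1)$ incomplete approximants of $q_{m^*}f$, whose denominators have uniformly bounded zeros, satisfy the hypotheses of Lemma \ref{lem:sufi} and Theorem \ref{teo;6}(ii), whence $R_0(q_{m^*}f)<\infty$ unless $q_{m^*}f$ is a \emph{polynomial} --- and only the polynomial alternative is excluded by the rationality hypothesis. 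Your argument admits the same repair: apply Lemma \ref{rmayor0} (or Theorem \ref{teo;6}(ii)) to $F$ with the type-$(n,m,m^*-j)$ approximants $(\tilde q_{n,m}, w\tilde p_{n,m})$; if $F$ is not a polynomial this yields $R_0(F)<\infty$, while if $F$ is a polynomial then indeed $f=F/w$ is rational with at most $j\le m^*-1$ poles. As written, however, the step fails, and it is the one place where the non-rationality hypothesis must actually be consumed.
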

\begin{proof}
From Lemma  \ref{rmayor0} we have $R_0(f) > 0$. So, $f$ is analytic
in a neighborhood of $z=0$. We also know that $R_0(\tilde{q}_m f)
\geq R_{m^*}(f)$ since the zeros of $\tilde{q}_m$ contain all the
poles that $f$ has in $D_{m^*}(f)$. Assume that $R_0(\tilde{q}_m f)=
R_{m^*}(f)$. Let us show that then $f$ has exactly $m^*$ poles in
$D_{m^*}(f)$. To the contrary, suppose that $f$  has in $D_{m^*}(f)$
at most $m^* -1$ poles. Then there exists a polynomial $q_{m^*}$,
with $\deg q_{m^*} <{m^*}$, such that
\[ R_0(q_{m^* } f) = R_{m^*}(f) =
R_0(q_{m^* }\tilde{q}_m f).
\]

Let
\[ q_{m^* }(z)\,\tilde{q}_m(z)\, f(z) = \sum_{n=0}^{\infty} a_n
z^n,
\]
then
\[R_{m^*}(f) =
R_0(q_{m^* }\tilde{q}_m f) = 1/ \limsup_{n\to\infty}
\sqrt[n]{|a_{n}|}.
\]
The $n$-th Taylor coefficient of $q_{m^*}[\tilde{q}_{n,m} f -
\tilde{p}_{n,m}]$ is equal to zero. Therefore, the $n$-th Taylor
coefficients of $q_{m^* }\tilde{q}_m f$ and $q_{m^* }\tilde{q}_m f -
q_{m^*}\tilde{q}_{n,m} f + q_{m^*}p_{n,m}$ coincide. Take $0 < r <
R_{m^*}(f)$ and recall that $\Gamma_r=\{z\in\mathbb{C}\,:\,|z|=r\}$.
Hence
$$
\begin{array}{rcl}\dst a_{n}& =&\dst \frac{1}{2\pi i} \int_{\Gamma_r}
\frac{[q_{m^* }\tilde{q}_m f - q_{m^*}\tilde{q}_{n,m} f +
q_{m^*}p_{n,m}](\omega)}{\omega^{n+1}}\, d\omega\\ \\ & =& \dst
 \frac{1}{2\pi i} \int_{\Gamma_r} \frac{[\tilde{q}_m
  - \tilde{q}_{n,m} ](\omega)\,
q_{m^*}(\omega) f(\omega)}{\omega^{n+1}}\, d\omega.
\end{array}
$$
Making use of (\ref{coro}) it readily follows that
\[
\frac{1}{R_{m^*}(f) }=\limsup_{n\to\infty}\sqrt[n]{|a_{n}|} \leq
\frac{\theta}{r}.
\]
Letting $r$ tend to $R_{m^*}(f)$ we have
\[\frac{1}{R_{m^*}(f) } \leq \frac{\theta}{R_{m^*}(f)}, \qquad \theta < 1,
\]
which implies that $R_{m^*}(f) = \infty$. Let us show that this is
not possible.

In fact,
\[
\left[q_{m^*}\,\tilde{q}_{n,m}\,f - q_{m^*}\,
\tilde{p}_{n,m}\right](z) = A_n z^{n+1} + \cdots,
\]
and $\deg q_{m^*}\tilde{p}_{n,m} \leq n  -1$. It follows that
$(q_{m^*} \tilde{p}_{n,m})/\tilde{q}_{n,m} = (q_{m^*}p_{n,m})/q_{n,m}$ is an
incomplete Pad\'e approximant of the function $q_{m^*} f$ of type
$(n,m,1)$, where the polynomials $p_{n,m}$ and $q_{n,m}$ are
relatively prime. As $\tilde{q}_{n,m}(0)\not =0,\, n\ge n_0$, the
polynomials $q_{m^*} \tilde{p}_{n,m}$ and $\tilde{q}_{n,m}$ do not
have a common zero at $z=0$ and $\lambda_n=0$ for all $n\ge n_0$.
Again, set $m_{n} = \deg q_{n,m}$ and
\[
{\tau}_{n} = \min\left\{n-1 - \deg p_{n,m},\,
 m  - m_{n}\right\}.
\]
Notice that ${\tau}_{n}= m -m_{n},\, n\ge n_0,$ because
\[ \min\left\{n-1 - \deg q_{m^*}\tilde{p}_{n,m},\,
 m  - \deg \tilde{q}_{n,m}\right\}= 0,\quad n\ge n_0.
\]
Thus, $m_{n} + {\tau}_{n} = m,\, n\ge n_0$. Using Lemma
\ref{lem:sufi} (for $m^* =1$) and Theorem \ref{teo;6} we conclude
that either $R_0(q_{m^*} f) < \infty$ or $q_{m^*} f$ is a
polynomial. However, the latter is not possible by hypotheses. On
the other hand, $R_0(q_{m^*} f) < \infty$ contradicts $R_{m^*}(f) =
\infty$. As claimed, $f$ has exactly $m^*$ poles in $D_{m^*}(f)$.
\end{proof}


\section{Simultaneous approximation}\label{simultaneos}
Throughout this section, $\mathbf{f}=(f_1,\dots,f_d)$ denotes a
system of formal power expansions as in \eqref{sistema} and
$\mathbf{m}=(m_1,\dots,m_d)\in\mathbb{N}^d$ is a fixed multi-index.
We are concerned with the simultaneous approximation of $\textbf{f}$
by sequences of vector rational functions defined according to
Definition \ref{defsimultaneos} taking account of
(\ref{incomplete}). That is, for each $n\in\mathbb{N}, n \geq |{\bf
m}|$, let $\left(R_{n,\mathbf{m},1},\dots,R_{n,\mathbf{m},d}\right)$
be a Hermite-Pad\'e approximation of type $(n,\mathbf{m})$
corresponding to $\mathbf{f}$.

As we mentioned earlier, $R_{n,\mathbf{m},k}$ is an incomplete
Pad\'e approximant of type $(n,|\mathbf{m}|,m_k)$ with respect to
$f_k,\,k=1,\dots,d$. Thus, from \eqref{incluido} we have
$$
D_{m_k}(f_k)\subset D^*_{|\mathbf{m}|}(f_k) \subset
D_{|\mathbf{m}|}(f_k),\quad k=1,\dots,d.
$$

\begin{defi} A vector ${\bf f} = (f_1,\ldots,f_d)$ of formal power
expansions is said to be algebraically independent with respect to
${\bf m} = (m_1,\ldots,m_d) \in {\mathbb{N}}^d$ if there do not
exist polynomials $p_1,\ldots,p_d$, at least one of which is
non-null, such that
\begin{itemize}
\item[c.1)] $\deg p_k \leq m_k -1,\, k=1,\ldots,d$,
\item[c.2)] $\sum_{k=1}^d p_k f_k$ is a polynomial.
\end{itemize}
\end{defi}
In particular, algebraic independence implies that for each
$k=1,\ldots,d$, $f_k$ is not a rational function with at most $m_k-1$ poles.
Notice that algebraic independence
may be verified solely in terms of the coefficients of the formal
Taylor expansions defining the system $\mathbf{f}$.

Given  $\mathbf{f}=(f_1,\dots,f_d)$  and
$\mathbf{m}=(m_1,\dots,m_d)\in {\mathbb{N}}^d$, we consider the
associated system $\overline{\mathbf{{f}}}$  of formal power
expansions
\[
\overline{\mathbf{{f}}}=
(f_1,\ldots,z^{m_1-1}f_1,f_2,\ldots,z^{m_d-1}f_d) =
(\bar{f}_{1},\ldots,\bar{f}_{|\bf m|}).
\]
We also define an associated multi-index $\overline{\mathbf{m}}$
given by $ \overline{\mathbf{m}}=(1,1,\dots,1) $ with
$|\overline{\mathbf{m}}|=|\mathbf{m}|$. The systems $\bf f$ and
$\overline{\bf f}$  share most properties. In particular, poles of
$\mathbf{f}$ and $\overline{\mathbf{f}}$ coincide and
$R_m(\mathbf{f})=R _m(\overline{\mathbf{f}}),\, m\in \mathbb{Z}_+$.

From the definition it readily follows that ${\bf f}$ is
algebraically independent with respect to $\bf m$ if and only if
there do not exist constants $c_k, k=1,\ldots,|{\bf m}|$, not all
zero, such that
\[ \sum_{k=1}^{|{\bf m}|}  c_k \bar{f}_k
\]
is a polynomial. That is, ${\bf f}$ is algebraically independent
with respect to $\bf m$ if and only if $\overline{\mathbf{f}}$ is
algebraically independent with respect to $\overline{\bf m}$. By the
same token, the system poles of ${\bf f}$ with respect to $\bf m$
are the same as the system poles of $\overline{\mathbf{f}}$ with
respect to $\overline{\bf m}$.

Finally, it is very easy to check that, for all $n\ge |\mathbf{m}|$,
the equations that define the common denominator $Q_{n,\mathbf{m}}$
for  $(\mathbf{f},\bf m)$ are the same as those defining
$Q_{n,\overline{\mathbf{m}}}$ for
$(\overline{\mathbf{f}},\overline{\mathbf{m}})$ and, consequently,
both classes of polynomials coincide.

\begin{lem} \label{lemuni} Let ${\bf f} = (f_1,\ldots,f_d)$ be a system
of formal Taylor expansions as in \eqref{sistema} and fix a
multi-index $\mathbf{m}\in {\mathbb{N}}^d$. Suppose that for all
$n\geq n_0$ the polynomial $ Q_{n,{\bf m}} $ is unique and $\deg
Q_{n,{\bf m}} =|{\bf m}|$. Then, the system ${\bf f}$ is
algebraically independent with respect to ${\bf m}$.
\end{lem}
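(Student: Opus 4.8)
The plan is to argue by contraposition: assuming that $\mathbf f$ is \emph{not} algebraically independent with respect to $\mathbf m$, I would show that for all sufficiently large $n$ the denominator $Q_{n,\mathbf m}$ fails to be simultaneously unique and of degree $|\mathbf m|$. Since the paper has already established that $\mathbf f$ is algebraically independent with respect to $\mathbf m$ if and only if $\overline{\mathbf f}$ is algebraically independent with respect to $\overline{\mathbf m}=(1,\dots,1)$, and that the polynomials $Q_{n,\mathbf m}$ and $Q_{n,\overline{\mathbf m}}$ coincide, I would replace $(\mathbf f,\mathbf m)$ by $(\overline{\mathbf f},\overline{\mathbf m})$ throughout and set $M=|\mathbf m|=|\overline{\mathbf m}|$. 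Writing $\overline{\mathbf f}=(\bar f_1,\dots,\bar f_M)$, the hypothesis of algebraic dependence then provides constants $c_1,\dots,c_M$, not all zero, and a polynomial $g$ with $\sum_{j=1}^M c_j\bar f_j=g$; put $N=\deg g$.

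Next I would recast the defining conditions of the common denominator as a homogeneous linear system. For $\overline{\mathbf m}=(1,\dots,1)$ a polynomial $Q=\sum_{i=0}^M q_i z^i$ of degree at most $M$ is a Hermite--Pad\'e denominator at level $n$ precisely when $[z^n](Q\bar f_j)=0$ for every $j=1,\dots,M$ (the numerators $\bar P_j$ being then forced as the degree-$(n-1)$ Taylor sections of $Q\bar f_j$). This is an $M\times(M+1)$ homogeneous system $A_n q=0$ with matrix $A_n=\bigl([z^{\,n-i}]\bar f_j\bigr)_{1\le j\le M,\,0\le i\le M}$, so its solution space $\mathcal K_n$ always has dimension at least one. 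The crux of the argument --- the step I expect to carry the real content --- is to observe that the functional relation forces a rank drop in $A_n$: since $\sum_j c_j[z^l]\bar f_j=[z^l]g=0$ for every $l>N$, for all $n>N+M$ every entry of $A_n$ has lower index $n-i\ge n-M>N$, whence $\sum_{j=1}^M c_j\,(\text{row }j\text{ of }A_n)=0$. As the $c_j$ are not all zero, the rows of $A_n$ are linearly dependent, $\operatorname{rank}A_n\le M-1$, and therefore $\dim\mathcal K_n\ge 2$ for all $n>N+M$.

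Finally I would convert this dimension surplus into the desired failure. From $\dim\mathcal K_n\ge2$ I would extract a nonzero denominator of degree at most $M-1$: if every nonzero element of $\mathcal K_n$ had degree exactly $M$, then for two linearly independent elements an appropriate scalar combination cancelling the leading terms would be a nonzero element of strictly smaller degree, a contradiction. The resulting solution $Q^{*}\in\mathcal K_n$ with $\deg Q^{*}\le M-1$ yields, after cancelling common factors with its numerators and normalizing to be monic, a canonical denominator of degree at most $M-1$. By the assumed uniqueness this canonical denominator must coincide with $Q_{n,\mathbf m}$ itself, forcing $\deg Q_{n,\mathbf m}\le M-1<M$ for all large $n$ and contradicting the hypothesis. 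This contradiction establishes the algebraic independence of $\mathbf f$. The only delicate bookkeeping I anticipate is checking that the low-degree element of $\mathcal K_n$ genuinely defines an admissible Hermite--Pad\'e solution (the degree bounds on the numerators and the order of contact at the origin), but these follow at once from the Taylor-section description recorded above.
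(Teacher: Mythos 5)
Your proof is correct, and it reaches the contradiction by a different mechanism than the paper. Both arguments share the same skeleton: reduce to $\overline{\mathbf f}$ and $\overline{\mathbf m}=(1,\dots,1)$, use the dependency relation to produce an admissible denominator of degree at most $|\mathbf m|-1$, and then invoke uniqueness to contradict $\deg Q_{n,\mathbf m}=|\mathbf m|$. Where you differ is in how that low-degree denominator is manufactured. The paper eliminates one function: writing $f_1=p-\sum_{k\ge 2}c_kf_k$, it takes the Hermite--Pad\'e denominator $Q_n$ of the reduced $(d-1)$-function system (which by definition has degree at most $d-1$) and checks that $P_{n,1}=Q_np-\sum_{k\ge2}c_kP_{n,k}$ turns $Q_n$ into a valid denominator for the full system once $n\ge d+N$. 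You instead work directly with the defining $M\times(M+1)$ homogeneous system, observe that the relation $\sum_jc_j[z^l]\bar f_j=0$ for $l>N$ forces a row dependence and hence $\operatorname{rank}A_n\le M-1$ for $n>N+M$, and then extract a kernel element of degree at most $M-1$ by cancelling leading coefficients of two independent solutions. Your route is more self-contained and makes transparent exactly where the degree drop comes from (a rank deficiency), at the cost of re-deriving the Taylor-section description of the numerators; the paper's route is shorter because it recycles the existence statement built into Definition 1.1 for the smaller system. Both correctly handle the final step, where uniqueness of the canonical (reduced, monic) $Q_{n,\mathbf m}$ forces it to coincide with the low-degree solution and hence to have degree below $|\mathbf m|$.
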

\begin{proof}
Because of what said just before the statement of Lemma \ref{lemuni},
we can assume without loss of generality that
$\mathbf{m}=(1,1,\dots,1)$ and $d = |\bf m|$. We argue by contradiction.
Suppose that
there exist constants $c_k,\, k=1,\dots,d,$ not all zero, such that
$\sum_{k=1}^{d} c_k f_k$ is a polynomial. Should $d=1$,
$Q_{n,\mathbf{m}}\equiv 1$ for all $n$ sufficiently large and $\deg
Q_{n,\mathbf{m}}<1=|\mathbf{m}|$. If $d>1$, without loss of
generality, we can assume that $c_1\not=0$. Then
$$
f_1=p-\sum_{k=2}^{d} c_k f_k,
$$
where $p$ is a polynomial, say of degree $N$.

On the other hand, for each $n\ge d-1$, there exist polynomials
$Q_n, P_{n,k}$, $k=2,\ldots,d,$ such that
\begin{enumerate}
\item[-] $\deg P_{n,k} \leq n - 1,\, k=2,\ldots,d,\quad \deg Q_n
\le d-1,\quad Q_n \not\equiv 0,$
\item[-] $Q_n(z)\, f_k(z) - P_{n,k} (z) = A_k z^{n+1} + \cdots,
\quad k=2,\ldots,d.$
\end{enumerate}
Therefore,
\[
Q_n(z)\left( p(z)-\sum_{k=2}^d c_kf_k(z)\right)-\left( Q_n(z)\,
p(z)-\sum_{k=2}^d c_k P_{n,k}(z)\right) = A z^{n+1}+\dots
\]
and, for $n\ge d+N$, the polynomial $P_{n,1}=Q_n\, p-\sum_{k=2}^d
c_k P_{n,k}$ verifies $\deg P_{n,1}\le n-1$. Thus, for all $n$
sufficiently large, the polynomials $P_{n,k},\, k=1,\dots,d,$
satisfy Definition \ref{defsimultaneos} with respect to $\mathbf{f}$
and $\mathbf{m}$. Naturally, $Q_n$ gives rise to a polynomial
$Q_{n,\mathbf{m}}$ with $\deg Q_{n,\mathbf{m}}<d=|\mathbf{m}|$
against our assumption on $Q_{n,\mathbf{m}}$.
\end{proof}

Set
$$
\mathbf{D}_{\mathbf{m}}^*(\mathbf{f})=\left(D^*_{|\mathbf{m}|}(f_1),\dots,
D^*_{|\mathbf{m}|}(f_d) \right).
$$

The following corollaries are straightforward consequences of
Corollary \ref{cor;1} and Theorem \ref{inverso}, respectively,
together with the fact that, for each $k=1,\dots,d$,
$R_{n,\mathbf{m},k}=P_{n,\mathbf{m},k}/Q_{n,\mathbf{m}}$ is an
incomplete Pad\'e approximant of type $(n,|\mathbf{m}|,m_k)$ with
respect to $f_k$.
\begin{cor} \label{cor;2a} Let ${\bf f} = (f_1,\ldots,f_d)$ be a system
of formal Taylor expansions as in \eqref{sistema} and fix a
multi-index $\mathbf{m}\in\mathbb{N}^d$. Assume that ${\bf f}$ is
algebraically independent with respect to ${\bf m}$ and there exists
a polynomial $Q_{|{\mathbf{m}}|}$ of degree $ |\mathbf{m}|,\,
Q_{|{\mathbf{m}}|}(0) \neq 0, $ such that $\lim_{n \to \infty}
Q_{n,\mathbf{m}} = Q_{|{\mathbf{m}}|}$. Then $R_0(\mathbf{f})>0$,
the zeros of $Q_{|{\mathbf{m}}|}$ contain all the poles that
$\mathbf{f}$ has in $\mathbf{D}^*_{\mathbf{m}}(\mathbf{f})$, and
$R_0(f_k) < \infty$ for each $k=1,\ldots,d$.
\end{cor}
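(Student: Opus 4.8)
The plan is to reduce the statement to a componentwise application of the single-function result of Lemma~\ref{rmayor0}, exploiting the fact recalled just above that, for each $k=1,\dots,d$, the $k$-th component $R_{n,\mathbf{m},k}=P_{n,\mathbf{m},k}/Q_{n,\mathbf{m}}$ is an incomplete Pad\'e approximant of type $(n,|\mathbf{m}|,m_k)$ of $f_k$. The crucial point is that here the \emph{common} denominator $Q_{n,\mathbf{m}}$ is shared by all components, so in general it carries common factors with the individual numerators $P_{n,\mathbf{m},k}$; this is exactly why one must invoke the relaxed Lemma~\ref{rmayor0} (common factors allowed) rather than Corollary~\ref{cor;1}, whose hypotheses refer to the reduced denominator. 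Accordingly, for fixed $k$ I would set $\tilde{q}_{n,|\mathbf{m}|}=Q_{n,\mathbf{m}}$ and $\tilde{p}_{n,|\mathbf{m}|}=P_{n,\mathbf{m},k}$, playing the roles in Lemma~\ref{rmayor0} with $m=|\mathbf{m}|$ and $m^*=m_k$.

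Before applying the lemma I must verify its standing hypothesis that $f_k$ is not a polynomial, and this is where algebraic independence enters. If some $f_k$ were a polynomial, then taking $p_k\equiv 1$ (of degree $0$, which is $\le m_k-1$ since $m_k\ge 1$) and $p_j\equiv 0$ for $j\ne k$ would produce a nontrivial combination $\sum_j p_j f_j=f_k$ equal to a polynomial, contradicting the algebraic independence of $\mathbf{f}$ with respect to $\mathbf{m}$. The remaining hypotheses are immediate from the assumption: $\lim_n Q_{n,\mathbf{m}}=Q_{|\mathbf{m}|}$ with $\deg Q_{|\mathbf{m}|}=|\mathbf{m}|=m$ and $Q_{|\mathbf{m}|}(0)\ne 0$. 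Hence Lemma~\ref{rmayor0} yields, for every $k$, both $0<R_0(f_k)<\infty$ and the inclusion of all poles of $f_k$ in $D^*_{|\mathbf{m}|}(f_k)$, counting multiplicities, among the zeros of $Q_{|\mathbf{m}|}$.

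It remains to assemble these componentwise conclusions into the three assertions of the corollary. Since $R_0(\mathbf{f})=\min_{k}R_0(f_k)$ by definition of the largest common disk of analyticity, the strict positivity of each $R_0(f_k)$ gives $R_0(\mathbf{f})>0$, while the finiteness $R_0(f_k)<\infty$ is already established for each $k$. For the pole inclusion I would argue directly from the definition of a pole of a system in a system of domains: if $\xi$ is a pole of $\mathbf{f}$ in $\mathbf{D}^*_{\mathbf{m}}(\mathbf{f})$ of order $\tau$, there is an index $k$ with $\xi\in D^*_{|\mathbf{m}|}(f_k)$ at which $f_k$ has a pole of order exactly $\tau$; the componentwise result for this $k$ then forces $Q_{|\mathbf{m}|}$ to vanish at $\xi$ to order at least $\tau$, which is precisely the required inclusion counting multiplicities. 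I expect the only genuinely delicate steps to be this last bookkeeping of multiplicities against the system-of-domains definition, together with the conceptual point already flagged above, namely that the shared denominator obliges us to work with the common-factors-allowed version of the single-function theorem.
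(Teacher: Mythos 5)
Your proof is correct and follows essentially the same route as the paper, which obtains the corollary by applying the scalar incomplete-Pad\'e inverse result componentwise to $P_{n,\mathbf{m},k}/Q_{n,\mathbf{m}}$ viewed as an incomplete Pad\'e approximant of type $(n,|\mathbf{m}|,m_k)$ of $f_k$, using algebraic independence to rule out polynomial components. Your observation that the common-factors-allowed version (Lemma~\ref{rmayor0}) is the right tool here, rather than Corollary~\ref{cor;1} as literally cited in the text, is a correct and welcome precision, since the reduced denominators of the individual components need not converge even though the common denominators $Q_{n,\mathbf{m}}$ do.
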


\begin{cor} \label{cor;4} Let ${\bf f} = (f_1,\ldots,f_d)$ be a system
of formal Taylor expansions as in \eqref{sistema} and fix a
multi-index $\mathbf{m}=(m_1,\dots,m_d)\in\mathbb{N}^d$. Assume that
${\bf f}$ is algebraically independent with respect to ${\bf m}$ and
there exists a polynomial $Q_{|\mathbf{m}|}$ of degree
$|\mathbf{m}|,\, Q_{|\mathbf{m}|}(0) \neq 0,$ such that
$$
\limsup_{n \to \infty} \|Q_{|{\bf m}|} - Q_{n,{\bf m}}\|^{1/n} =
\theta < 1.
$$
Then, for each $k=1,\ldots,d,$  either $f_k$ has exactly $m_k$ poles
in $D_{m_k}(f_k)$ or $R_0(Q_{|{\bf m}|}f_k) > R_{m_k}(f_k)$.
\end{cor}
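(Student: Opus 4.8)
The plan is to reduce Corollary \ref{cor;4} to the single-function inverse result already established, namely Theorem \ref{inverso}, applied component by component. The key observation is the one stated just before the corollary: for each fixed $k=1,\dots,d$, the function $R_{n,\mathbf{m},k}=P_{n,\mathbf{m},k}/Q_{n,\mathbf{m}}$ is an incomplete Pad\'e approximant of type $(n,|\mathbf{m}|,m_k)$ with respect to $f_k$. So if I set $\tilde{q}_{n,|\mathbf{m}|}=Q_{n,\mathbf{m}}$, $\tilde{p}_{n,|\mathbf{m}|}=P_{n,\mathbf{m},k}$, and $\tilde{q}_{|\mathbf{m}|}=Q_{|\mathbf{m}|}$, then the hypothesis of Corollary \ref{cor;4} is exactly condition \eqref{coro} of Theorem \ref{inverso} with $(m,m^*)$ replaced by $(|\mathbf{m}|,m_k)$.

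\textbf{First} I would verify the standing hypotheses of Theorem \ref{inverso} for each $f_k$. That theorem requires $f_k$ not to be a rational function with at most $m_k-1$ poles; this is precisely the content of algebraic independence, as noted immediately after its definition in the excerpt (``algebraic independence implies that for each $k=1,\dots,d$, $f_k$ is not a rational function with at most $m_k-1$ poles''). The polynomial $Q_{|\mathbf{m}|}$ has degree $|\mathbf{m}|$ and $Q_{|\mathbf{m}|}(0)\neq 0$, matching the requirement on $\tilde{q}_m$ there. Thus all the structural assumptions of Theorem \ref{inverso} are met for each index $k$ with $m=|\mathbf{m}|$ and $m^*=m_k$.

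\textbf{Then} I simply invoke Theorem \ref{inverso} for each $k$. Its conclusion is a dichotomy: either $f_k$ has exactly $m^*=m_k$ poles in $D_{m_k}(f_k)$, which are zeros of $Q_{|\mathbf{m}|}$ counting multiplicities, or $R_0(Q_{|\mathbf{m}|}f_k)>R_{m_k}(f_k)$. This is verbatim the conclusion of Corollary \ref{cor;4} for that index. Running over $k=1,\dots,d$ yields the statement. I would also remark that Lemma \ref{rmayor0}, which underlies Theorem \ref{inverso}, needs $f_k$ not to be a polynomial; this too follows from algebraic independence, so no separate argument is required.

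\textbf{I do not expect a genuine obstacle here}, since the corollary is essentially a packaging of Theorem \ref{inverso} in the simultaneous setting, and the excerpt explicitly says the corollary is a ``straightforward consequence'' of that theorem. The only point requiring a word of care is the identification of the incomplete-approximant data: one must confirm that the \emph{same} common denominator $Q_{n,\mathbf{m}}$ serves as the incomplete denominator $\tilde{q}_{n,|\mathbf{m}|}$ for every component simultaneously, and that the degree bounds $\deg P_{n,\mathbf{m},k}\le n-m_k$ from a.1) give exactly the type $(n,|\mathbf{m}|,m_k)$ demanded in Definition \ref{defincompletos}. Once this bookkeeping is in place, the proof is immediate.
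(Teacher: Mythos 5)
Your proposal is correct and coincides with the paper's own argument: the paper gives no separate proof, stating only that Corollary \ref{cor;4} follows from Theorem \ref{inverso} applied to each $f_k$ via the identification of $P_{n,\mathbf{m},k}/Q_{n,\mathbf{m}}$ as an incomplete Pad\'e approximant of type $(n,|\mathbf{m}|,m_k)$, with algebraic independence supplying the hypothesis that $f_k$ is not a rational function with at most $m_k-1$ poles. Your bookkeeping of the degree bounds and the non-polynomial condition is exactly the verification the paper leaves implicit.
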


\subsection{Proof of Theorem \ref{reciprocal}}\label{proof}

Let us prove first that b) implies a). From Lemma \ref{lemuni} it
follows that $\mathbf{f}$ is algebraically independent with respect
to $\mathbf{m}$ and, in turn, from Corollary \ref{cor;2a} we know
that $R_0({\bf f})
>0$. So, it is enough to prove that ${\bf f}$ has exactly $|{\bf m}|$
system poles with respect to $\mathbf{m}$ and without loss of
generality we can assume that $\mathbf{m}=(1,1,\dots,1)$.

We divide the proof into two parts. First, we collect a set of
$|{\bf m}|$ candidates to be system poles of $\mathbf{f}$  and prove
that they are the zeros of
$Q_{|\mathbf{m}|}$. We also prove that any system pole of
$\mathbf{f}$ must be among these candidates. In the second part
we prove that all these points previously collected are actually
system poles of $\mathbf{f}$.

In the disk $D_0({\bf f})$ there cannot be system poles of
$\mathbf{f}$ since all the functions $f_k$ are analytic. Now, for
each $k=1,\ldots,d$, by Corollaries \ref{cor;4} and \ref{cor;2a},
either the disk $D_1(f_k)$ contains exactly one pole of $f_k$, and
it is a zero of $Q_{|{\bf m}|}$, or $R_0(Q_{|{\bf m}|}f_k)
> R_1(f_k)$. Therefore, $D_0({\bf f})\not=\mathbb{C}$ and
$Q_{|{\bf m}|}$ contains as zeros all the poles of $f_k$ on the
boundary of $D_0(f_k)$ counting their order for
$k=1,\ldots,d=|\mathbf{m}|$. Moreover, the functions $f_k$ cannot
have on the boundary of $D_0( f_k)$ singularities other than poles.

According to this, the poles  of ${\bf f}$ on the boundary
of $D_0({\bf f})$ are all zeros of $Q_{|{\bf m}|}$ counting
multiplicities and the boundary contains no other singularity except poles.
Let us call them candidate system poles of
$\mathbf{f}$ and denote them by $a_1,\ldots,a_{n_1}$ taking account
of their order. Obviously, any system pole of $\mathbf{f}$ on the boundary
of $D_0({\bf
f})$ must  be one of the candidates since no linear combination of the
functions in $\bf f$ can produce poles at any other point.

Since $\deg Q_{|{\bf m}|} = |{\bf m}|$ we have $n_1 \leq |{\bf m}|$.
Should $n_1 = |{\bf m}|$ we have found all the candidates we were
looking for. Let us assume that $n_1 < |{\bf m}|$. We can find
coefficients $c_1,\ldots,c_{|{\bf m}|}$ such that $$
\sum_{k=1}^{|{\bf m}|} c_k f_k
$$
is analytic in a neighborhood of $\overline{D_0(\mathbf{f})}$.
Finding the coefficients $c_k$ reduces to solving a linear
homogeneous system of $n_1$ equations with $|{\bf m}|$ unknowns. In
fact, if $z=a$ is a candidate  system pole of $\mathbf{f}$
with multiplicity $\tau$ we obtain $\tau$ equations choosing the
coefficients $c_k$ so that
\begin{equation} \label{ecuacion} \int_{|\omega - a| =
\delta} (\omega - a)^i \left(\sum_{k=1}^{|{\bf m}|} c_k
f_k(\omega)\right) d\omega = 0, \qquad i=0,\ldots,\tau -1.
\end{equation}
where $\delta$ is sufficiently small.  We do the same with each
distinct candidate on the boundary of $D_0({\bf f})$. The linear homogeneous
system of equations so
obtained has at least $|{\bf m}|-n_1$ linearly independent solutions
which we denote by ${\bf c}^1_j,$ $ j=1,\ldots,|{\bf m}| -
n_1^*,\,n_1^*\le n_1.$

Set
\[ {\bf c}^1_j = (c_{j,1}^1,\ldots,c_{j,|{\bf m}|}^1),
\quad j=1,\dots,|{\bf m}| - n_1^*.
\]
Construct the $(|{\bf m}|-n_1^*) \times |{\bf m}|$ dimensional
matrix
\[ C^1 = \left(
\begin{array}{c}
{\bf c}^1_1 \\
\vdots \\
{\bf c}^1_{|{\bf m}|-n_1^*}
\end{array}
\right).
\]
Define the system $\mathbf{g}_1$ of $|{\bf m}| -n_1^*$ functions by
means of
\[ {\bf g}_1^t= C^1 {\bf f}^t = (g_{1,1},\ldots,g_{1,|{\bf m}|-n_1^*})^t,
\]
where $(\cdot)^t$ means taking transpose. We have
\[ g_{1,j} = \sum_{k=1}^{|{\bf m}|} c_{j,k}^1 f_{k},\qquad
j=1,\dots,|{\bf m}|-n_1^*.
\]

As the rows of $C^1$ are non-null, none of the functions $g_{1,j}$ are
polynomials because of the algebraic independence of
$\mathbf{f}$ with respect to $\mathbf{m}=(1,1,\dots,1)$.

Consider the region
$$
 D_0({\bf g}_1) = \bigcap_{j=1}^{|{\bf m}|-n_1^*} D_0(g_{1,j}).
$$
Obviously, by construction,  $ D_0({\bf f})$ is strictly included in
$D_0({\bf g}_1)$ and there cannot be system poles of $\mathbf{f}$ in
$D_0({\bf g}_1)\setminus \overline{D_0({\bf f})}$.

It is easy to see that
$$
\sum_{k=1}^{|{\bf m}|}
c_{j,k}^1\,\frac{P_{n,\mathbf{m},k}}{Q_{n,{\bf m}}}
$$
 is an $(n,|{\bf m}|,1)$
incomplete Pad\'e approximant of $g_{1,j}$. Using Theorem
\ref{inverso} with $m^*=1$, for each $j=1,\ldots,|{\bf m}|- n_1^*$,
either the disk $D_1( g_{1,j})$ contains exactly one pole of
$g_{1,j}$, and it is a zero of $Q_{|{\bf m}|}$, or $R_0(Q_{|{\bf
m}|}g_{1,j}) > R_1(g_{1,j})$. In particular, $D_0({\bf
g}_1)\not=\mathbb{C}$ and all the singularities of ${\bf g}_1$ on the
boundary of
$D_0({\bf g}_1)$ are poles which are zeros of $Q_{|{\bf m}|}$ counting
their order.
They constitute the next layer of candidate system poles of
$\mathbf{f}$ (now, it is possible that some candidates are not poles
of ${\bf f}$ since the functions $f_k$ intervene in the linear
combination as we saw in example \eqref{sistema1}). All the
system poles of $\mathbf{f}$ on the boundary of $D_0({\bf g}_1)$ must
necessarily be poles of $\mathbf{g}_1$ for the same reason as in the
preceding case.

 Let us denote these new candidates by $a_{n_1+1},\ldots,a_{n_1+n_2}$. Of
 course $n_1 +
n_2 \leq |{\bf m}|$. Should $n_1 + n_2 = |{\bf m}|$, we are done.
Otherwise, $n_2 < |{\bf m}| - n_1\le |{\bf m}| - n_1^*$ and we can
repeat the process. In order to eliminate the $n_2$ poles we have
$|{\bf m}| - n_1^*$ functions which are analytic on $D_0({\bf g}_1)$
and meromorphic on a neighborhood of $\overline{D_0({\bf g}_1)}$.
The corresponding homogeneous linear system of equations, similar to
\eqref{ecuacion},
has at least $|{\bf m}| - n_1^*-n_2$ linearly independent solutions
${\bf c}^2_j, j=1,\ldots,|{\bf m}| - n_1^* -n_2^*,\, n_2^*\le n_2.$
Set
\[ {\bf c}^2_j = (c_{j,1}^2,\ldots,c_{j,|{\bf m}|-n_1^*}^2),
\quad j=1,\dots,|{\bf m}| - n_1^*-n_2^*.
\]
Construct the $(|{\bf m}|-n_1^*-n_2^*) \times |{\bf m}|-n_1^*$
dimensional matrix
\[ C^2 = \left(
\begin{array}{c}
{\bf c}^2_1 \\
\vdots \\
{\bf c}^2_{|{\bf m}|-n_1^*-n_2^*}
\end{array}
\right).
\]
Define the system $\mathbf{g}_2$ of $|{\bf m}| -n_1^* -n_2^*$
functions by means of
\[ {\bf g}_2^t=
C^2 {\bf g}_1^t = C^2C^1{\bf f}^t = (g_{2,1},\ldots,g_{2,|{\bf
m}|-n_1^*-n_2^*})^t.
\]
The rows of $C^2C^1$ are of the form ${\bf c}_j^2 C^1, j=1,\ldots,
|{\bf m}| -n_1^*-n_2^*$, where $C^1$ has rank $|{\bf m}| -n_1^*$ and
the vectors ${\bf c}_k^2$ are linearly independent. Therefore, the
rows of $C^2C^1$ are linearly independent; in particular, they are
non-null. Consequently, the components of ${\bf g}_2$ are not
polynomials because of the algebraic independence of $\mathbf{f}$
with respect to $\mathbf{m}=(1,1,\dots,1)$. Thus, we can apply again
Theorem \ref{inverso}. The proof is completed using finite
induction.

Notice that the numbers $n_1, n_2, \ldots$ which so arise are
greater than or equal to $1$ and on each iteration their sum is less
than or equal to $|{\bf m}|$. Therefore, in a finite number of steps
their sum must equal $|{\bf m}|$. Consequently, the number of
candidate system poles of $\mathbf{f}$ in some disk, counting
their multiplicities, is exactly equal to $|{\bf m}|$ and they are precisely
the zeros of $Q_{|{\bf m}|}$ as we wanted to prove.

Now, suppose that there exists a candidate  system pole of
$\mathbf{f}$ that is not such or being a system pole has order smaller than
the multiplicity of the corresponding zero of $Q_{|\bf m|}$. Then, for some
$\alpha\in\mathbb{Z}_+$, we have
$$
\mathbf{g}_\alpha=(g_{\alpha,1},\dots,g_{\alpha,\nu}),\quad
\nu=|m|-n_0^*-n_1^*-\dots-n_\alpha^*,
$$
with $0\le n^*_j\le n_j,\,j=0,1,\dots,\alpha$, $n_0=0$, and
$\mu=|\mathbf{m}|-n_0-n_1-\dots-n_\alpha>0$ such that there exists a
point $a$ on the boundary of the region
$$
D_0(\mathbf{g}_\alpha)=\bigcap_{j=1}^\nu D_0(g_{\alpha,j})
$$
that is a pole of order $\tau$ of $g_{\alpha,j_0}$ for some
$j_0\in\{1,\dots,\nu\}$  but is not a system pole of $\mathbf{f}$ or
is one of order less than $\tau$. Let $a_{n_\alpha+1},
\dots,a_{n_{\alpha+1}},\,
n_{\alpha+1}\ge\tau,$ be the singularities of the functions
$g_{\alpha,j}$ on the boundary of $D_0(\mathbf{g}_\alpha)$ counting
multiplicities. We distinguish two cases. First, suppose that
$n_1+\dots+n_{\alpha+1}=|\mathbf{m}|$; then
$n_{\alpha+1}=\mu\le\nu$. All the functions $g_{\alpha,j}$ admit
meromorphic extension to a neighborhood of
$\overline{D_0(\mathbf{g}_\alpha)}$. We pose the problem of finding
coefficients $c_1,\dots,c_\nu$ such that
$$
\sum_{j=1}^\nu c_j g_{\alpha,j}
$$
is analytic on a neighborhood of
$\overline{D_0(\mathbf{g}_\alpha)}$. The problem consists in solving
a linear homogeneous system with $\mu$ equations and $\nu$ unknowns
similar to \eqref{ecuacion} but, due to the fact that the point $a$
is not a system pole of $\mathbf{f}$ or it is one of order less than
$\tau$, one of the equations may be written as a linear combination
of the others and we have at most $\mu-1$ equations, with
$\mu-1<\nu$. So, a non-trivial solution necessarily exists which
defines a function $g$ analytic on a neighborhood of
$\overline{D_0(\mathbf{g}_\alpha)}$ by means of
$$
g=\sum_{j=1}^\nu c_j g_{\alpha,j}=\sum_{k=1}^{|\mathbf{m}|} d_k
f_k,\quad d_k\in\mathbb{C},\quad k=1,\dots,|\mathbf{m}|.
$$
Following the same argument used in the process carried out to find
the candidate system poles of $\mathbf{f}$, we deduce that $g$
is not a polynomial. Now,
$$
\sum_{k=1}^{|{\bf m}|} d_{k}\,\frac{P_{n,\mathbf{m},k}}{Q_{n,{\bf
m}}}
$$
 is an $(n,|{\bf m}|,1)$ incomplete
Pad\'e approximant of $g$. Using Theorem \ref{inverso} with $m^*=1$,
either the disk $D_1( g)$ contains exactly one pole of $g$, and it
is a zero of $Q_{|{\bf m}|}$, or $R_0(Q_{|{\bf m}|}g) > R_1(g)$. But
both alternatives are impossible since all the zeros of
$Q_{|\mathbf{m}|}$ belong to $\overline{D_0(\mathbf{g}_\alpha)}$.
So, we have reached a contradiction.

In case that $n_1+\dots+n_{\alpha+1}<|\mathbf{m}|$ we are in the
middle of the process described above and now, when solving the
corresponding system of equations to eliminate the $n_{\alpha+1}$
poles, we obtain $n^*_{\alpha+1}<n_{\alpha+1}$ since, again, one of
the equations is redundant. This implies that, in the last step, say
$\beta$, when $n_1+\dots+n_{\beta}=|\mathbf{m}|$ we have
$|\mathbf{m}|-n_0-n_1-\dots-n_\beta=\mu<\nu=
|\mathbf{m}|-n^*_0-n^*_1-\dots-n^*_\beta$ reaching the same
contradiction as before. We have proved a posteriori that
$n_j^*=n_j,\, j=1,2,\dots$

Thus, the proof of the inverse-type result is complete. Also, we
have that $Q_{|\mathbf{m}|}\equiv
\mathcal{Q}_{|\mathbf{m}|}(\mathbf{f},\mathbf{m})$.

Let us prove now that a) implies b). Except for some details related
to the numbers $R_{\xi}(\mathbf{f},\mathbf{m})$, where $\xi$ is a
system pole of $\mathbf{f}$, the arguments are similar to those
employed in \cite{GS}. In spite of this, for completeness, we give
the entire proof.

For each $n\ge |\mathbf{m}|$, let $q_{n,\mathbf{m}}$ be the
polynomial $Q_{n,\mathbf{m}}$ normalized so that
\begin{equation}\label{norma}
\sum_{k=1}^{|\mathbf{m}|} |\lambda_{n,k}|=1,\qquad
q_{n,\mathbf{m}}(z) = \sum_{k=1}^{|\mathbf{m}|} \lambda_{n,k}z^k.
\end{equation}
Due to this normalization, the polynomials $q_{n,\mathbf{m}}$ are
uniformly bounded on each compact subset of $\mathbb{C}$.

Let $\xi$ be a system pole of order $\tau$ of $\mathbf{f}$  with
respect to ${\bf m}$. Consider a polynomial combination $g_{1}$ of
type \eqref{combination} that is analytic on a neighborhood of
$\overline{D}_{|\xi|}$ except for a simple pole at $z=\xi$ and
verifies that $R_1(g_1)=R_{\xi,1}(\mathbf{f},\mathbf{m})\,
(=r_{\xi,1}(\mathbf{f},\mathbf{m})) $. Then, we have
$$
g_{1}=\sum_{k=1}^{|\mathbf{m}|}p_{k,1} f_k,\quad \deg
p_{k,1}<m_k,\quad k=1,\dots,|\mathbf{m}|,
$$
and
$$
q_{n,\mathbf{m}}(z)\,h_1(z)-(z-\xi)\, \sum_{k=1}^{|\mathbf{m}|}
p_{k,1}(z)\, P_{n,\mathbf{m},k}(z)= A z^{n+1}+\dots,
$$
where $h_1(z)=(z-\xi)\,g_1(z)$. Hence, the function
$$
\frac{q_{n,\mathbf{m}}(z)\dst\,h_1(z)}{z^{n+1}}-\frac{z-\xi}{z^{n+1}}
\sum_{k=1}^{|\mathbf{m}|} p_{k,1}(z)\, P_{n,\mathbf{m},k}(z)
$$
is analytic on $D_1(g_1)$. Take $0 < r < R_{1}(g_1)$ and set
$\Gamma_r=\{z\in\mathbb{C}\,:\,|z|=r\}$. Using Cauchy's formula, we
obtain
$$
q_{n,\mathbf{m}}(z)h_1(z)-(z-\xi)\, \sum_{k=1}^{|\mathbf{m}|}
 p_{k,1}(z) P_{n,\mathbf{m},k}(z)=\frac{1}{2\pi
i}\int_{\Gamma_r}\frac{z^{n+1}}{\omega^{n+1}}
\frac{q_{n,\mathbf{m}}(\omega)\,h_1(\omega)}{\omega -z}\,d\omega,
$$
for all  $z$ with $|z|<r$, since $\deg \sum_{k=1}^{|\mathbf{m}|}
p_{k,1} P_{n,\mathbf{m},k}<n$. In particular, taking $z=\xi$ in the
above formula, we arrive at
\begin{equation}\label{cauchy}
q_{n,\mathbf{m}}(\xi)\,h_1(\xi)=\frac{1}{2\pi
i}\int_{\Gamma_r}\frac{\xi^{n+1}}{\omega^{n+1}}
\frac{q_{n,\mathbf{m}}(\omega)\,h_1(\omega)}{\omega -\xi}\,d\omega.
\end{equation}
Straightforward calculations lead to
$$
\limsup_{n\to\infty}\left|h_1(\xi)q_{n,\mathbf{m}}(\xi)\right|^{1/n}\le
\frac{|\xi|}{r}.
$$
Using that $h_1(\xi)\not=0$ and making $r$ tend to $R_{1}(g_1)$ we obtain
$$
\limsup_{n\to\infty}\left|q_{n,\mathbf{m}}(\xi)\right|^{1/n}\le
\frac{|\xi|}{R_{\xi,1}(\mathbf{f},\mathbf{m})}
 <1.
$$

Now, we employ induction. Suppose that
\begin{equation}\label{derivada}
\limsup_{n\to\infty}\left|q_{n,\mathbf{m}}^{(j)}(\xi)\right|^{1/n}\le
\frac{|\xi|}{R_{\xi,j+1}(\mathbf{f},\mathbf{m})},\quad j=0,1,\dots,s-2
\end{equation}
(recall that $R_{\xi,j+1}(\mathbf{f},\mathbf{m}) =
\min_{k=1,\ldots,j+1}r_{\xi,k}(\mathbf{f},\mathbf{m})$), with
$s\le\tau$, and let us prove that formula \eqref{derivada} holds for
$j=s-1$.

Consider a polynomial combination $g_{s}$ of
the type \eqref{combination} that is analytic on a neighborhood of
$\overline{D}_{|\xi|}$ except for a pole of order $s$ at $z=\xi$ and
verifies that $R_s(g_s)=r_{\xi,s}(\mathbf{f},\mathbf{m})$. Then, we
have
$$
g_{s}=\sum_{k=1}^{|\mathbf{m}|}p_{k,s} f_k,\quad \deg
p_{k,s}<m_k,\quad k=1,\dots,|\mathbf{m}|.
$$
Set $h_s(z)= (z-\xi)^s g_s(z)$. Reasoning as in the previous case,
the function
$$
\frac{q_{n,\mathbf{m}}(z)\dst\,h_s(z)}{z^{n+1}(z-\xi)^{s-1}}-
\frac{z-\xi}{z^{n+1}}
\sum_{k=1}^{|\mathbf{m}|} p_{k,s}(z)\, P_{n,\mathbf{m},k}(z)
$$
is analytic on $D_s(g_s)\setminus \{\xi\}$. Put
$P_s=\sum_{k=1}^{|\mathbf{m}|} p_{k,s} P_{n,\mathbf{m},k}$. Fix an
arbitrary compact set $K\subset (D_s(g_s)\setminus \{\xi\})$.  Take
$\delta>0$ sufficiently small and $0 < r < R_{s}(g_s)$ with
$K\subset D_r$. Using Cauchy's integral formula and the residue
theorem, for all $z\in K$, we have
\begin{equation}\label{hermite}
\frac{q_{n,\mathbf{m}}(z)h_s(z)}{(z-\xi)^{s-1}}-(z-\xi)\,P_s(z)
=I_n(z)-J_n(z),
\end{equation}
where
$$
I_n(z)=\frac{1}{2\pi i}\int_{\Gamma_r}\frac{z^{n+1}}{\omega^{n+1}}
\frac{q_{n,\mathbf{m}}(\omega)\,h_s(\omega)}{(\omega-\xi)^{s-1}(\omega
-z)}\,d\omega
$$
and
$$
J_n(z)=\frac{1}{2\pi
i}\int_{|\omega-\xi|=\delta}\frac{z^{n+1}}{\omega^{n+1}}
\frac{q_{n,\mathbf{m}}(\omega)\,h_s(\omega)}{(\omega-\xi)^{s-1}(\omega
-z)}\,d\omega.
$$
We have used in \eqref{hermite} that $\deg P_s<n$. The first
integral $I_n$ is estimated as in \eqref{cauchy}  to obtain
\begin{equation}\label{estimate1}
\limsup_{n\to\infty}
\left\|I_n(z)\right\|_K^{1/n}\le\frac{\|z\|_K}{R_s(g_s)}=
\frac{\|z\|_K}{r_{\xi,s}(\mathbf{f},\mathbf{m})}.
\end{equation}

As for $J_n$, write
$$
q_{n,\mathbf{m}}(\omega)=\sum_{j=0}^{|\mathbf{m}|}
\frac{q_{n,\mathbf{m}}^{(j)}(\xi)}{j!}(\omega-\xi)^{j}.
$$
Then
\begin{equation}\label{estimate2}
J_n(z)=\sum_{j=0}^{s-2}\frac{1}{2\pi
i}\int_{|\omega-\xi|=\delta}\frac{z^{n+1}}{\omega^{n+1}}
\frac{q_{n,\mathbf{m}}^{(j)}(\xi)}{j!(\omega -z)}
\frac{h_s(\omega)}{(\omega-\xi)^{s-1-j}}\,d\omega.
\end{equation}
Using the inductive hypothesis \eqref{derivada}, estimating the
integral in \eqref{estimate2}, and making $\varepsilon$ tend to
zero, we obtain
$$
\limsup_{n\to\infty}
\left\|J_n(z)\right\|_K^{1/n}\le\frac{\|z\|_K}{|\xi|}\frac{|\xi|}
{R_{\xi,s-1}(\mathbf{f},\mathbf{m})}=
\frac{\|z\|_K}{R_{\xi,s-1}(\mathbf{f},\mathbf{m})},
$$
which, together with \eqref{estimate1} and \eqref{hermite}, gives
\begin{equation}\label{estimate3}
\limsup_{n\to\infty} \left\| q_{n,\mathbf{m}}(z)h_s(z)-(z-\xi)^s\,
P_s(z)\right\|_K^{1/n}\le
\frac{\|z\|_K}{R_{\xi,s}(\mathbf{f},\mathbf{m})}.
\end{equation}

As the function inside the norm in \eqref{estimate3} is analytic
in
$D_s(g_s)$, inequality \eqref{estimate3} also holds for any compact
set $K\subset D_s(g_s)$. Besides, we can differentiate $s-1$ times
that function and the inequality still holds true by virtue of
Cauchy's integral formula. So, taking $z=\xi$ in \eqref{estimate3}
for the differentiated version, we obtain
$$
\limsup_{n\to\infty}\left|\left(q_{n,\mathbf{m}}h_s\right)^{(s-1)}(\xi)
\right|^{1/n}\le \frac{|\xi|}{R_{\xi,s}(\mathbf{f},\mathbf{m})}.
$$
Using  the Leibnitz formula for higher derivatives of a product of
two functions and the
induction  hypothesis \eqref{derivada}, we arrive at
$$
\limsup_{n\to\infty}\left|q_{n,\mathbf{m}}^{(s-1)}(\xi)
\right|^{1/n}\le \frac{|\xi|}{R_{\xi,s}(\mathbf{f},\mathbf{m})},
$$
since $h_s(\xi)\not=0$. This completes the induction.

Let $\xi_1,\dots,\xi_p$ be the distinct system poles of ${\bf f}$ and
let $\tau_i$ be the order of $\xi_i$ as a system pole, $i=1,\dots,p.$
By assumption, $\tau_1+\dots+\tau_p=|\mathbf{m}|$. We have proved
that, for $i=1,\dots,p$ and $j=0,1,\dots,\tau_i-1,$
\begin{equation}\label{derivada2}
\limsup_{n\to\infty}\left
|q_{n,\mathbf{m}}^{(j)}(\xi_i)\right|^{1/n}\le
\frac{|\xi_i|}{R_{\xi_i,j+1}(\mathbf{f},\mathbf{m})}\le
\frac{|\xi_i|}{R_{\xi_i}(\mathbf{f},\mathbf{m})}.
\end{equation}

Recall that $\mathcal{Q}_{|\mathbf{m}|}(\mathbf{f},\mathbf{m})$ is
the monic polynomial whose zeros are the system poles of
$\mathbf{f}$ with respect to $\mathbf{m}$. Denote by $L_{i,j},\,
i=1,\dots,p ;\, j=0,1,\dots \tau_i-1,$ the fundamental interpolating
polynomials at the zeros of
$\mathcal{Q}_{|\mathbf{m}|}(\mathbf{f},\mathbf{m})$; that is, for
each $i=1,\dots,p$ and $j=0,1,\dots \tau_i-1,$  $\deg L_{i,j}\le
|\mathbf{m}|-1$ and
$$
L_{i,j}^{(\nu)}(b_\kappa)=\delta_{i\kappa}\delta_{j\nu},\quad
\kappa=1,\dots,p,\quad \nu=0,1,\dots,\tau_i-1.
$$
Then
\begin{equation}\label{derivada3}
q_{n,\mathbf{m}}(z)=\lambda_{n,|\mathbf{m}|}
\mathcal{Q}_{|\mathbf{m}|}(\mathbf{f},\mathbf{m})+
\sum_{i=1}^p\sum_{j=0}^{\tau_i-1} q_{n,\mathbf{m}}^{(j)}(\xi_i)\,
L_{i,j}(z).
\end{equation}
From \eqref{derivada2} and  \eqref{derivada3} it follows that
$$
\limsup_{n\to\infty}\|q_{n,\mathbf{m}}-\lambda_{n,|\mathbf{m}|}
\mathcal{Q}_{|\mathbf{m}|}(\mathbf{f},\mathbf{m})\|_K^{1/n} \le
\theta<1,
$$
for any compact $K\subset\mathbb{C}$, where
\begin{equation}\label{speed}
\theta=\max\left\{\frac {|\xi|}
{R_{\xi}(\mathbf{f},\mathbf{m})}\,:\, \xi \in
{\mathcal{P}}_{|\mathbf{m}|}(\mathbf{f},\mathbf{m}) \right\}.
\end{equation}
As all norms in finite dimensional spaces are equivalent, we obtain
\begin{equation}\label{speed2}
\limsup_{n\to\infty}\|q_{n,\mathbf{m}}-\lambda_{n,|\mathbf{m}|}
\mathcal{Q}_{|\mathbf{m}|}(\mathbf{f},\mathbf{m})\|^{1/n} \le
\theta<1.
\end{equation}
Now, necessarily we have
\begin{equation} \label{des}
\liminf_{n\to\infty} |\lambda_{n,|\mathbf{m}|}|>0,
\end{equation}
since if there exists a subsequence $\Lambda\subset\mathbb{N}$ such
that $\lim_{n\in\Lambda} \lambda_{n,|\mathbf{m}|}=0$, then from
\eqref{speed2} we have $\lim_{n\in\Lambda} \|q_{n,\mathbf{m}}\|=0$,
contradicting \eqref{norma}.

As $q_{n,\mathbf{m}}=\lambda_{n,|\mathbf{m}|}Q_{n,\mathbf{m}}$, we
have proved
\begin{equation}\label{speed3}
\limsup_{n\to\infty}\|Q_{n,\mathbf{m}}-
\mathcal{Q}_{|\mathbf{m}|}(\mathbf{f},\mathbf{m})\|^{1/n} \le
\theta<1,
\end{equation}
 where $\theta$ is given by \eqref{speed}. In particular, for $n\ge
 n_0$, $\deg Q_{n,\mathbf{m}}=|\mathbf{m}|$. The difference of any
 two non-collinear solutions $Q_1$ and $Q_2$ of  Definition
\ref{defsimultaneos} with the same degree and equal leading
coefficient produces a new solution of smaller degree but we have
proved that any solution must have degree $|\mathbf{m}|$ for all sufficiently large $n$. Hence, the
polynomial
$Q_{n,\mathbf{m}}$ is uniquely determined for all sufficiently large
$n$. With this we have concluded the proof of the direct result.

Let us prove that the upper bound  in \eqref{speed3} actually
gives the exact rate of convergence to obtain \eqref{eq:6b}. To the
contrary, suppose that
\begin{equation}\label{speed4}
\limsup_{n\to\infty}\|Q_{n,\mathbf{m}}-
\mathcal{Q}_{|\mathbf{m}|}(\mathbf{f},\mathbf{m})\|^{1/n}=\theta^\prime<
 \theta.
\end{equation}

Let $\zeta$ be a system pole of $\mathbf{f}$ such that
$$
\frac {|\zeta|}
{R_{\zeta}(\mathbf{f},\mathbf{m})}=\theta=\max\left\{\frac {|\xi|}
{R_{\xi}(\mathbf{f},\mathbf{m})}\,:\, \xi \in
{\mathcal{P}}_{|\mathbf{m}|}(\mathbf{f},\mathbf{m}) \right\}.
$$
Naturally, if there is inequality in \eqref{speed4} then
$R_{\zeta}(\mathbf{f},\mathbf{m})<\infty$.

Choose a polynomial combination
\begin{equation}\label{combination2}
g=\sum_{k=1}^d p_k f_k,\quad \deg p_k<m_k,\quad k=1,\dots,d,
\end{equation}
that is analytic on a neighborhood of $\overline{D}_{|\zeta|}$
except for a pole of order $s$ at $z=\zeta$ with $R_s(g)=
R_{\zeta}(\mathbf{f},\mathbf{m})$. On the boundary of $D_s(g)$ the
function $g$ must have a singularity which is not a system pole. In
fact, if all the singularities were of this type we could find a
different polynomial combination $g_1$ of type  \eqref{combination2}
for which $R_s(g_1) > R_s(g)= R_{\zeta}(\mathbf{f},\mathbf{m})$
against our definition of $R_{\zeta}(\mathbf{f},\mathbf{m})$. For
short, put
$\mathcal{Q}_{|\mathbf{m}|}(\mathbf{f},\mathbf{m})=Q_{|\mathbf{m}|}$.
Consequently, the function $Q_{|\mathbf{m}|}\, g$ can be represented
as a power series $\sum_{j=0}^\infty c_jz^j$ with radius of
convergence $ R_{\zeta}(\mathbf{f},\mathbf{m})$. So
\begin{equation}\label{combination3} \limsup_{n\to\infty}
\sqrt[n]{|c_{n}|}=1/R_{\zeta}(\mathbf{f},\mathbf{m}).
\end{equation}

On the other hand, by virtue of \eqref{combination2}, we have
$$
H_n(z):=
 Q_{n,\mathbf{m}}(z)\,g(z)-\sum_{k=1}^d p_k(z)\,
P_{n,\mathbf{m},k}(z) = B_n z^{n+1}+\dots
$$
and this function is analytic at least in  $D_{|\zeta|}$ with a zero
of multiplicity at least $n+1$ at $z=0$.
Taking $r<|\zeta|$, we obtain
$$
\frac{1}{2\pi i}\int_{\Gamma_r}\frac{H_n(\omega)}{\omega^{n+1}}\,
d\omega =0.
$$
Set $P_n=\sum_{k=1}^d p_k P_{n,\mathbf{m},k}$. Clearly, $
Q_{|\mathbf{m}|}\, g \equiv {( Q_{|\mathbf{m}|}-Q_{n,\mathbf{m}})\,
g+P_n}
 +H_n
$
and, since $\deg P_n\le n-1$, we arrive at
$$
c_n=\frac{1}{2\pi i}\int_{\Gamma_r}\frac{Q_{|\mathbf{m}|}(\omega)\,
g(\omega)}{\omega^{n+1}}\, d\omega=\frac{1}{2\pi i}\int_{\Gamma_r}
\frac{\left[Q_{|\mathbf{m}|}(\omega)-Q_{n,\mathbf{m}}
(\omega)\right] g(\omega)} { \omega^{n+1}}  d\omega.
$$
Taking \eqref{combination3} and \eqref{speed4} into consideration,
estimating the integral, and letting $r$ tend to $|\zeta|$, it
follows that
$$
\frac{1}{R_{\zeta}(\mathbf{f},\mathbf{m})}=\limsup_{n\to\infty}
\sqrt[n]{|c_{n}|}\le\frac{\theta^\prime}{|\zeta|}<\frac{\theta}{|\zeta|}
=\frac{1}{R_{\zeta}(\mathbf{f},\mathbf{m})},
$$
which is absurd. We have completed the proof of Theorem \ref{reciprocal}. \hfill $\Box$

\subsection{Convergence of the Hermite-Pad\'e approximants}\label{convergence}

The following result is in some sense the analogue of the formula
displayed just after (58) in \cite{gon2} written in different terms.

\begin{thm} \label{cor:1} Assume that either ${\rm a})$ or ${\rm b})$ in
Theorem \ref{reciprocal} takes place. If $\xi$ is a system pole of
order $\tau$ of ${\bf f}$ with respect to $\bf m$, then
\begin{equation}\label{derivada2*}
\max_{j=0,\ldots,\overline{s}}\limsup_{n\to\infty}\left
|Q_{n,\mathbf{m}}^{(j)}(\xi)\right|^{1/n} =
\frac{|\xi|}{R_{\xi,\overline{s}+1}(\mathbf{f},\mathbf{m})},\quad
\overline{s}=0,1,\dots,\tau-1.
\end{equation}
\end{thm}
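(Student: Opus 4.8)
The plan is to prove the two inequalities in \eqref{derivada2*} separately; the upper bound is essentially already contained in the proof of Theorem \ref{reciprocal}, while the lower bound requires a contour argument localized at $\xi$. First I would record that, since $R_{\xi,j+1}(\mathbf{f},\mathbf{m})=\min_{k=1,\ldots,j+1}r_{\xi,k}(\mathbf{f},\mathbf{m})$ is non-increasing in $j$, the ratio $|\xi|/R_{\xi,j+1}(\mathbf{f},\mathbf{m})$ is non-decreasing in $j$; hence $\max_{j=0,\ldots,\overline{s}}|\xi|/R_{\xi,j+1}(\mathbf{f},\mathbf{m})=|\xi|/R_{\xi,\overline{s}+1}(\mathbf{f},\mathbf{m})$, the right-hand side of \eqref{derivada2*}. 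We may also assume $R_{\xi,\overline{s}+1}(\mathbf{f},\mathbf{m})<\infty$, the case of infinite radius being settled directly by the upper bound.

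For the inequality $\le$ I would invoke the inductive estimate \eqref{derivada2} obtained during the proof of Theorem \ref{reciprocal} for the normalized denominators $q_{n,\mathbf{m}}$. Since $q_{n,\mathbf{m}}=\lambda_{n,|\mathbf{m}|}Q_{n,\mathbf{m}}$ with $|\lambda_{n,|\mathbf{m}|}|\le 1$ by \eqref{norma} and $\liminf_n|\lambda_{n,|\mathbf{m}|}|>0$ by \eqref{des}, one has $|\lambda_{n,|\mathbf{m}|}|^{1/n}\to 1$, so $\limsup_n|Q_{n,\mathbf{m}}^{(j)}(\xi)|^{1/n}=\limsup_n|q_{n,\mathbf{m}}^{(j)}(\xi)|^{1/n}\le|\xi|/R_{\xi,j+1}(\mathbf{f},\mathbf{m})$ for $j=0,\ldots,\tau-1$. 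Taking the maximum over $j=0,\ldots,\overline{s}$ and using the monotonicity above yields $\le$.

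For the inequality $\ge$ I would argue by contradiction, mirroring the exact-rate part of the proof of Theorem \ref{reciprocal}. Put $\Theta=\max_{j=0,\ldots,\overline{s}}\limsup_n|Q_{n,\mathbf{m}}^{(j)}(\xi)|^{1/n}$ and suppose $\Theta<|\xi|/R_{\xi,\overline{s}+1}(\mathbf{f},\mathbf{m})$. Choose $s^*\in\{1,\ldots,\overline{s}+1\}$ with $r_{\xi,s^*}(\mathbf{f},\mathbf{m})=R_{\xi,\overline{s}+1}(\mathbf{f},\mathbf{m})$ and a combination $g=\sum_k p_k f_k$, $\deg p_k<m_k$, analytic on a neighborhood of $\overline{D}_{|\xi|}$ except for a pole of order $s^*$ at $\xi$ with $R_{s^*}(g)=r_{\xi,s^*}(\mathbf{f},\mathbf{m})$; exactly as in Theorem \ref{reciprocal}, $g$ must have a singularity on $\partial D_{s^*}(g)$ that is not a system pole. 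Writing $Q_{|\mathbf{m}|}:=\mathcal{Q}_{|\mathbf{m}|}(\mathbf{f},\mathbf{m})$, which has a zero of order $\tau\ge s^*$ at $\xi$, the product $Q_{|\mathbf{m}|}g=\sum_n c_n z^n$ is analytic in $D_{R_{s^*}(g)}$ but has a genuine singularity on the bounding circle, so $\limsup_n|c_n|^{1/n}=1/R_{s^*}(g)=1/R_{\xi,\overline{s}+1}(\mathbf{f},\mathbf{m})$. Note also $|\xi|<R_{s^*}(g)$, since $g$ is analytic in an annulus just outside $\Gamma_{|\xi|}$.

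The crux is to estimate $c_n$ through the local data at $\xi$. With $P_n=\sum_k p_k P_{n,\mathbf{m},k}$, the Hermite-Pad\'e conditions give $H_n:=Q_{n,\mathbf{m}}g-P_n=A_n z^{n+1}+\cdots$ and $\deg P_n\le n-1$, so the $n$-th Taylor coefficient of $Q_{n,\mathbf{m}}g$ at the origin vanishes. For $|\xi|<r<R_{s^*}(g)$ I would split $c_n=I_n^{(1)}+I_n^{(2)}$ according to $Q_{|\mathbf{m}|}g=Q_{n,\mathbf{m}}g+(Q_{|\mathbf{m}|}-Q_{n,\mathbf{m}})g$, where $I_n^{(1)}=\frac{1}{2\pi i}\int_{\Gamma_r}Q_{n,\mathbf{m}}(\omega)g(\omega)\,\omega^{-n-1}d\omega$ reduces, by the residue theorem and the vanishing just noted, to the residue at $\xi$. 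Writing $g(\omega)=G(\omega)(\omega-\xi)^{-s^*}$ with $G(\xi)\ne 0$ and expanding by the Leibniz rule, this residue equals $\sum_{a=0}^{s^*-1}Q_{n,\mathbf{m}}^{(a)}(\xi)\Phi_a(n)$ with $|\Phi_a(n)|^{1/n}\to|\xi|^{-1}$; since $s^*-1\le\overline{s}$, one obtains $\limsup_n|I_n^{(1)}|^{1/n}\le\Theta/|\xi|$. The remaining term is controlled globally by $|I_n^{(2)}|\le r^{-n}\|Q_{|\mathbf{m}|}-Q_{n,\mathbf{m}}\|_{\Gamma_r}\|g\|_{\Gamma_r}$, so \eqref{speed3} together with the equivalence of norms gives, for each fixed $r$, $\limsup_n|I_n^{(2)}|^{1/n}\le\theta/r$, and letting $r\to R_{s^*}(g)$ yields $\limsup_n|I_n^{(2)}|^{1/n}\le\theta/R_{\xi,\overline{s}+1}(\mathbf{f},\mathbf{m})$. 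Because $\theta<1$ and $\Theta<|\xi|/R_{\xi,\overline{s}+1}(\mathbf{f},\mathbf{m})$, both bounds lie strictly below $1/R_{\xi,\overline{s}+1}(\mathbf{f},\mathbf{m})$, forcing $\limsup_n|c_n|^{1/n}<1/R_{\xi,\overline{s}+1}(\mathbf{f},\mathbf{m})$ and contradicting the preceding paragraph. I expect the main obstacle to be the residue computation, namely isolating the dependence on $Q_{n,\mathbf{m}}^{(a)}(\xi)$ for $a\le s^*-1$ and verifying $|\Phi_a(n)|^{1/n}\to|\xi|^{-1}$, together with reproducing the argument of Theorem \ref{reciprocal} that guarantees a non-system-pole singularity on $\partial D_{s^*}(g)$ pinning down the radius of convergence of $Q_{|\mathbf{m}|}g$.
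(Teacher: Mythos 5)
Your proposal is correct and follows essentially the same route as the paper: the upper bound is read off from \eqref{derivada2} and \eqref{des}, and the lower bound is the same contradiction argument with the same choice of $g$, the same identification of the radius of convergence of $\mathcal{Q}_{|\mathbf{m}|}(\mathbf{f},\mathbf{m})\,g$, and the same splitting of $c_n$ into a far-field term controlled by $\theta$ and a local term at $\xi$ expressed through $Q_{n,\mathbf{m}}^{(a)}(\xi)$, $a\le s^*-1$. The only difference is cosmetic: you extract the local contribution as the residue of $Q_{n,\mathbf{m}}g/\omega^{n+1}$ at $\xi$ over $\Gamma_r$, while the paper uses the two-component contour $\Gamma_{\delta,r}$ and Taylor-expands $Q_{|\mathbf{m}|}-Q_{n,\mathbf{m}}$ on the small circle around $\xi$ — the same computation.
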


\begin{proof}
Let $\xi$ be as indicated.  From \eqref{derivada2} and \eqref{des}
we have
\[\max_{j=0,\ldots,\overline{s}}\limsup_{n\to\infty}\left
|Q_{n,\mathbf{m}}^{(j)}(\xi)\right|^{1/n} \leq
\frac{|\xi|}{R_{\xi,\overline{s}+1}(\mathbf{f},\mathbf{m})},\quad
\overline{s}=0,1,\dots,\tau-1.
\]
Assume that there is strict inequality for some $\overline{s}
\in \{0,\ldots,\tau-1\}$ and fix $\overline{s}$.

Choose a polynomial combination
$$
g=\sum_{k=1}^d p_k f_k,\quad \deg p_k<m_k,\quad k=1,\dots,d,
$$
that is analytic on a neighborhood of $\overline{D}_{|\xi|}$ except
for a pole of order $ {s}\,(\leq \overline{s}+1)$ at $z=\xi$ with
$R_s(g)= R_{\xi,\overline{s}+1}(\mathbf{f},\mathbf{m})$. As before,
on the boundary of $D_s(g)$ the function $g$ must have a singularity
which is not a system pole. Set
$\mathcal{Q}_{|\mathbf{m}|}(\mathbf{f},\mathbf{m})=Q_{|\mathbf{m}|}$.
Consequently, the function  $Q_{|\mathbf{m}|}\, g$ can be
represented as a power series $\sum_{j=0}^\infty c_jz^j$ with radius
of convergence $ R_{\xi,\overline{s}+1}(\mathbf{f},\mathbf{m})$. So
\begin{equation}\label{combination3*} \limsup_{n\to\infty}
\sqrt[n]{|c_{n}|}=1/R_{\xi,\overline{s}+1}(\mathbf{f},\mathbf{m}).
\end{equation}

On the other hand, by virtue of \eqref{combination2}, we have
$$
H_n(z):=
 Q_{n,\mathbf{m}}(z)\,g(z)-\sum_{k=1}^d p_k(z)\,
P_{n,\mathbf{m},k}(z) = B_n z^{n+1}+\dots
$$
and this function is analytic  in  $D_{s}(g) \setminus \{\xi\}$.
Take $r$ smaller than but sufficiently close to
$R_{\xi,\overline{s}+1}(\mathbf{f},\mathbf{m})$ and $\delta > 0$
sufficiently small. Let $\Gamma_{\delta,r}$ be the positively
oriented curve determined by $\gamma_{\delta}=\{\omega: |\omega -
\xi| = \delta\}$ and $\Gamma_r$. We have
$$
\frac{1}{2\pi
i}\int_{\Gamma_{\delta,r}}\frac{H_n(\omega)}{\omega^{n+1}}\, d\omega
=0.
$$
Set $P_n=\sum_{k=1}^d p_k P_{n,\mathbf{m},k}$ and
$h(\omega)=(\omega-\xi)^sg(\omega)$. Obviously, $$
Q_{|\mathbf{m}|}\, g\equiv {( Q_{|\mathbf{m}|}-Q_{n,\mathbf{m}})\,
g+P_n}
 +H_n
$$ and, since $\deg P_n\le n-1$, we obtain
\begin{multline*}\dst
c_n=\dst\frac{1}{2\pi
i}\int_{\Gamma_{\delta,r}}\frac{Q_{|\mathbf{m}|}(\omega)
g(\omega)}{\omega^{n+1}}\, d\omega =\frac{1}{2\pi
i}\int_{\Gamma_{\delta,r}} \frac{[Q_{|\mathbf{m}|}-Q_{n,\mathbf{m}}
](\omega) h(\omega)} {(\omega -\xi)^s \omega^{n+1}} d\omega
\\ \\ =\dst
\frac{1}{2\pi i}\int_{\Gamma_r }
\frac{[Q_{|\mathbf{m}|}-Q_{n,\mathbf{m}} ](\omega) h(\omega)}
{(\omega -\xi)^s \omega^{n+1}}  d\omega - \sum_{\nu = 0}^{|{\bf m}|}
\frac{1}{2\pi i}\int_{\gamma_\delta} \frac{
[Q_{|\mathbf{m}|}^{(\nu)} -Q_{n,\mathbf{m}}^{(\nu)}](\xi) h(\omega)}
{\nu!(\omega -\xi)^{s-\nu} \omega^{n+1}}  d\omega
\\ \\  =\dst
\frac{1}{2\pi i}\int_{\Gamma_r }
\frac{[Q_{|\mathbf{m}|}-Q_{n,\mathbf{m}} ](\omega) h(\omega)}
{(\omega -\xi)^s \omega^{n+1}}  d\omega - \sum_{\nu = 0}^{s-1}
\frac{1}{2\pi i}\int_{\gamma_\delta}
\frac{Q_{n,\mathbf{m}}^{(\nu)}(\xi) h(\omega)} {\nu!(\omega
-\xi)^{s-\nu} \omega^{n+1}}  d\omega.
\end{multline*}

Estimating these integrals, using \eqref{eq:6b} and the temporary
assumption that
\[ \max_{j=0,\ldots,\overline{s}}\limsup_{n\to\infty}\left
|Q_{n,\mathbf{m}}^{(j)}(\xi)\right|^{1/n} = \frac{|\xi|}{\kappa} <
\frac{|\xi|}{R_{\xi,\overline{s}+1}(\mathbf{f},\mathbf{m})},
\]
we obtain
\[ \limsup_{n\to \infty} |c_n|^{1/n} \leq \max \left\{\frac{1}{\kappa},
\frac{\theta}{R_{\xi,\overline{s}+1}(\mathbf{f},\mathbf{m})}\right\}
< \frac{ 1}{R_{\xi,\overline{s}+1}(\mathbf{f},\mathbf{m})},
\]
which contradicts \eqref{combination3*}. Hence,
\eqref{derivada2*} takes place.
\end{proof}

Now, we are ready to give the analogue of \eqref{eq:6} for
simultaneous approximation. We need to introduce some notation. Fix
$k\in\{1,\dots,d\}$. Let $D_{|{\bf m}|,k}({\bf f},{\bf m})$ be the
largest disk centered at $z=0$ in which all the poles of $f_k$ are
system poles of $\bf f$ with respect to $\bf m$, their order as
poles of $f_k$ does not exceed their order as system poles, and
$f_k$ has no other singularity. By $R_{|{\bf m}|,k}({\bf f},{\bf
m})$ we denote the radius of this disk. Let $\xi_1,\ldots,\xi_{N}$
be the poles of $f_k$ in $D_{|{\bf m}|,k}({\bf f},{\bf m})$. For
each $j=1,\dots,N$, let $\widetilde{\tau}_j$ be the order of $\xi_j$
as a pole of $f_k$ and $\tau_j$ its order as a system pole. By
assumption $\widetilde{\tau}_j\leq \tau_j$. Set
\[ R^*_{|{\bf m}|,k}({\bf f},{\bf m}) =
\min\left\{R_{|{\bf m}|,k}({\bf f},{\bf m}), \min_{j=1,\ldots,N}
R_{\xi_j,\widetilde{\tau}_j}(\bf f, m)\right\},
\]
and let $ D^*_{|{\bf m}|,k}({\bf f},{\bf m})$
be the disk centered at $z=0$ with this radius.

Recall that
$\sigma(B)$ stands for the $1$-dimensional Hausdorff content of the
set $B$. We say that a compact set $K\subset\mathbb{C}$ is
$\sigma$-regular if for each $z_0\in K$ and for each $\delta>0$, it
holds that $ \sigma\{z\in K\,:\, |z-z_0|<\delta \}>0. $

\begin{thm}\label{saff3}
Let ${\bf f}$ be a system of formal Taylor expansions as in
\eqref{sistema} and fix a multi-index $\mathbf{m}\in
{\mathbb{Z}}_+^d \setminus \{\bf 0\}$. Suppose that either ${\rm
a)}$ or ${\rm b)}$ in Theorem \ref{reciprocal} takes place. Then,
\begin{equation}\label{inequality3}
 \limsup_{n \to \infty}
\|f_k - R_{n,{\bf m},k}\|_{K}^{1/n} \leq \frac{\|z\|_
{K}}{R^*_{m_k}(f_k)}, \quad k=1,\ldots,d,
\end{equation}
where ${K}$ is any compact subset of $  D^*_{|{\bf m}|}(f_k)
\setminus {\mathcal{P}_{|\bf m|}(\bf f,\bf m)}$. If, additionally,
${K}$ is $\sigma$-regular, then we have equality in
\eqref{inequality3}. Moreover,
\[ R^*_{m_k}(f_k) = R^*_{|{\bf m}|,k}({\bf f},{\bf m}),
\qquad k=1,\ldots,d.
\]
\end{thm}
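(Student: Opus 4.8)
The plan is to reduce the whole statement to an estimate for the scalar error
$$
f_k - R_{n,{\bf m},k} = \frac{Q_{n,{\bf m}} f_k - P_{n,{\bf m},k}}{Q_{n,{\bf m}}}.
$$
Write $Q := \mathcal{Q}_{|{\bf m}|}({\bf f},{\bf m})$ and $E_{n,k} := Q_{n,{\bf m}} f_k - P_{n,{\bf m},k}$. Since a) or b) holds, $Q_{n,{\bf m}}\to Q$ geometrically and $Q$ does not vanish on $K\subset D^*_{|{\bf m}|}(f_k)\setminus{\mathcal P}_{|{\bf m}|}({\bf f},{\bf m})$, so $1/Q_{n,{\bf m}}$ is bounded above and below on $K$ for large $n$ and the rate of $f_k - R_{n,{\bf m},k}$ on $K$ equals that of $E_{n,k}$. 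The structural fact I would exploit is that, by a.1)--a.2), the coefficients of $Q_{n,{\bf m}}f_k$ of degrees $n-m_k+1,\dots,n$ vanish, so $P_{n,{\bf m},k}$ is the full Taylor polynomial of degree $n$ of $Q_{n,{\bf m}}f_k$ and $E_{n,k}$ is exactly its Taylor tail (degrees $>n$). In $D_{R_{|{\bf m}|,k}({\bf f},{\bf m})}$ the only singularities of $Q_{n,{\bf m}}f_k$ are the poles $\xi_1,\dots,\xi_N$ of $f_k$, because $Qf_k$ is analytic there (as $\widetilde\tau_j\le\tau_j$).

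For the upper bound \eqref{inequality3} I would split $Q_{n,{\bf m}}f_k = A_n + B_n$, where $A_n(z)=\sum_{j=1}^N\sum_{s=1}^{\widetilde\tau_j}\alpha^{(n)}_{j,s}(z-\xi_j)^{-s}$ collects the principal parts at the $\xi_j$ and $B_n$ is analytic on $D_{R_{|{\bf m}|,k}}$. By linearity of truncation, $E_{n,k}=(A_n-T_nA_n)+(B_n-T_nB_n)$. For the analytic part, Cauchy's formula on $\Gamma_r$ with $r\uparrow R_{|{\bf m}|,k}$ gives $\limsup_n\|B_n-T_nB_n\|_K^{1/n}\le \|z\|_K/R_{|{\bf m}|,k}$, once one notes $\limsup_n\|B_n\|_{\Gamma_r}^{1/n}\le 1$ (the factor $Q_{n,{\bf m}}f_k$ is uniformly bounded on $\Gamma_r$ and $A_n$ is subexponential). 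For the polar part, each $\alpha^{(n)}_{j,s}$ is, by Leibniz, a fixed linear combination of the $Q_{n,{\bf m}}^{(\nu)}(\xi_j)$ with $\nu\le\widetilde\tau_j-s$, so Theorem \ref{cor:1} (equivalently \eqref{derivada2}) yields $\limsup_n|\alpha^{(n)}_{j,s}|^{1/n}\le\max_{0\le\nu\le\widetilde\tau_j-1}|\xi_j|/R_{\xi_j,\nu+1}({\bf f},{\bf m})=|\xi_j|/R_{\xi_j,\widetilde\tau_j}({\bf f},{\bf m})$, using the monotonicity of $s\mapsto R_{\xi,s}$. Since $\|(z-\xi_j)^{-s}-T_n[(z-\xi_j)^{-s}]\|_K^{1/n}\to\|z\|_K/|\xi_j|$ (the same rate on either side of $|z|=|\xi_j|$), the polar part contributes $\max_j\|z\|_K/R_{\xi_j,\widetilde\tau_j}$. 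Taking the larger of the two contributions gives $\limsup_n\|E_{n,k}\|_K^{1/n}\le\|z\|_K/R^*_{|{\bf m}|,k}({\bf f},{\bf m})$, which is \eqref{inequality3} once the final identity is in place.

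Next I would establish the identity and sharpness together. The upper bound already gives uniform convergence on compacts of $D_{R^*_{|{\bf m}|,k}}\setminus{\mathcal P}_{|{\bf m}|}$, and since the finitely many poles have zero content, $R_{n,{\bf m},k}\to f_k$ in $\sigma$-content on compacts of $D_{R^*_{|{\bf m}|,k}}$; by the definition of the radius in \cite{cacoq} this forces $R^*_{m_k}(f_k)\ge R^*_{|{\bf m}|,k}$. For the reverse inequality and for equality in \eqref{inequality3} I would argue by contradiction, assuming $\limsup_n\|f_k-R_{n,{\bf m},k}\|_K^{1/n}<\|z\|_K/R^*_{|{\bf m}|,k}$ on a $\sigma$-regular $K$, and split according to which term realizes $R^*_{|{\bf m}|,k}=\min\{R_{|{\bf m}|,k},\min_j R_{\xi_j,\widetilde\tau_j}\}$. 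If $R^*_{|{\bf m}|,k}=R_{|{\bf m}|,k}$, then $Qf_k$ is genuinely singular on $\Gamma_{R_{|{\bf m}|,k}}$ (on that circle one of the three defining conditions of $R_{|{\bf m}|,k}$ fails), so its Taylor tail has exact rate $\|z\|_K/R_{|{\bf m}|,k}$ and faster decay of $E_{n,k}$ would extend $Qf_k$ analytically beyond $R_{|{\bf m}|,k}$, a contradiction. If instead $R^*_{|{\bf m}|,k}=R_{\xi_{j_0},\widetilde\tau_{j_0}}$, then inverting the principal-part relation shows faster decay of $E_{n,k}$ would force $\limsup_n|Q_{n,{\bf m}}^{(\nu)}(\xi_{j_0})|^{1/n}<|\xi_{j_0}|/R_{\xi_{j_0},\widetilde\tau_{j_0}}$ for every $\nu\le\widetilde\tau_{j_0}-1$, contradicting the exact equality of Theorem \ref{cor:1}. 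This yields $R^*_{m_k}(f_k)\le R^*_{|{\bf m}|,k}$ and equality.

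The hard part will be this last, sharpness, step. Two difficulties must be handled. First, the $\sigma$-regularity of $K$ is exactly what lets me upgrade the coefficient-level (i.e. $\sigma$-content) lower bounds into a genuine lower bound for $\|\cdot\|_K$, since a priori the approximants could be close to $f_k$ off a set of small content. Second, one must disentangle the two competing contributions in the decomposition: when $\|z\|_K$ is small the analytic and polar rates can be of different magnitudes, and I must verify that the coefficients $\alpha^{(n)}_{j,s}$ invert faithfully so that a strictly faster error decay truly contradicts the exactness in Theorem \ref{cor:1} rather than being absorbed by the other term.
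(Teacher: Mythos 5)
Your upper-bound argument is, in substance, the paper's own: where you split $Q_{n,{\bf m}}f_k$ into the principal parts at the $\xi_j$ plus an analytic remainder and truncate each piece, the paper integrates $z^{n+1}\omega^{-n-1}(Q_{n,{\bf m}}f_k)(\omega)/(\omega-z)$ over the contour $\Gamma_{\delta,r}$ made of $\Gamma_r$ and small circles about the $\xi_j$; the residues on the small circles are exactly your polar contributions, and in both versions they are controlled by the derivative rates $\limsup_n|Q_{n,{\bf m}}^{(j)}(\xi)|^{1/n}\le|\xi|/R_{\xi,j+1}({\bf f},{\bf m})$ coming from \eqref{derivada2} and Theorem \ref{cor:1}. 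That part, together with the deduction $R^*_{m_k}(f_k)\ge R^*_{|{\bf m}|,k}({\bf f},{\bf m})$ from $\sigma$-convergence inside $D^*_{|{\bf m}|,k}({\bf f},{\bf m})$, is sound.

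The genuine gap is in the reverse inequality $R^*_{m_k}(f_k)\le R^*_{|{\bf m}|,k}({\bf f},{\bf m})$. You propose to extract it from the same contradiction that gives sharpness of \eqref{inequality3}, whose hypothesis is a strictly faster \emph{sup-norm} rate on a fixed $\sigma$-regular compact $K$. But $R^*_{m_k}(f_k)$ is defined as the radius of the largest disk on whose compact subsets convergence holds in $\sigma$-\emph{content}; negating the reverse inequality only tells you that $\sigma$-convergence persists on a strictly larger disk, and this does not by itself yield any improved sup-norm rate on a compact inside the smaller disk — so your contradiction machine is never fed. The paper closes this step with a different input: from the proof of \cite[Theorem 3.6]{cacoq} it imports the bound $\limsup_n|Q_{n,\mathbf{m}}^{(j)}(\xi)|^{1/n}\le|\xi|/R^*_{m_k}(f_k)$, $j=0,\dots,\widetilde{\tau}-1$, valid for every pole $\xi$ of $f_k$ of order $\widetilde{\tau}$ inside $D^*_{m_k}(f_k)$, and plays it directly against the equality \eqref{derivada2*} of Theorem \ref{cor:1}: if $R^*_{m_k}(f_k)>\min_j R_{\xi_j,\widetilde{\tau}_j}({\bf f},{\bf m})$ the two are incompatible. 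That quantitative ``poles attract zeros at the rate dictated by the $\sigma$-convergence radius'' statement is not recoverable from what you have set up, so you need either to cite it or to prove it. A lesser remark: for the exactness of \eqref{inequality3} on $\sigma$-regular compacts both you and the paper ultimately defer to the argument of \cite[Theorem 4.4]{cacoq}; your picture of inverting the triangular relation between the $\alpha^{(n)}_{j,s}$ and the $Q_{n,\mathbf{m}}^{(\nu)}(\xi_j)$ is correct, but as written it still presupposes passing from decay on $K$ to decay on circles about the $\xi_j$, which is precisely the step that machinery supplies.
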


\begin{proof} Let us fix $k\in\{1,\dots,d\}$ and
maintain the notation introduced above. Let $K$ be a compact subset contained in
$D^*_{|{\bf m}|,k}({\bf f},{\bf m}) \setminus {\mathcal{P}_{|\bf
m|}(\bf f,\bf m)}$. Take $r$ smaller than but sufficiently close to
$R^*_{|{\bf m}|,k}({\bf f},{\bf m})$, and $\delta >0$ sufficiently
small so that $K$ is in the region bounded by $\Gamma_r$ and the
circles $\{z:|z - \xi_j| = \delta\}, j=1,\ldots,N$. Let
$\Gamma_{\delta,r}$ be the curve with positive orientation
determined by $\Gamma_r$ and those circles. On account of Definition
\ref{defsimultaneos}, using Cauchy's integral formula we have
\[ (Q_{n,\bf m}f_k - P_{n,{\bf m},k})(z) = \frac{1}{2\pi i}
\int_{\Gamma_{\delta,r}} \frac{z^{n+1}}{\omega^{n+1}}
\frac{(Q_{n,\bf m}f_k)(\omega)}{\omega - z} d\omega
\]
Since $\lim_n Q_{n,\bf m} = Q_{|{\bf m}|}$, using \eqref{derivada2*}
and standard arguments we obtain
\begin{equation} \label{final} \limsup_{n \to \infty}
\|f_k - R_{n,{\bf m},k}\|_{K}^{1/n} \leq
\frac{\|z\|_ {K}}{R^*_{|{\bf m}|,k}({\bf f},{\bf m})}.
\end{equation}

This last relation implies that $\sigma$-$\lim_{n\to \infty}
R_{n,{\bf m},k} = f_k$ inside $D^*_{|{\bf m}|,k}({\bf f},{\bf m})$.
Since $R_{m_k}^*(f_k)$ is the largest disk inside of which such
convergence takes place it readily follows that $R^*_{|{\bf
m}|,k}({\bf f},{\bf m}) \leq R_{m_k}^*(f_k)$. Should $D^*_{|{\bf
m}|,k}({\bf f},{\bf m})$ contain on its boundary some singularity
which is not a system pole then necessarily $R^*_{|{\bf m}|,k}({\bf
f},{\bf m}) = R_{m_k}^*(f_k)$ because $\sigma$-convergence implies
that all singularities inside must be zeros of $Q_{|{\bf m}|}$ but
the zeros of this polynomial are all system poles as we proved in
Theorem \ref{reciprocal}. Assume that $ R_{m_k}^*(f_k) > R^*_{|{\bf
m}|,k}({\bf f},{\bf m})$. Then, we have $R_{m_k}^*(f_k) >
\min_{j=1,\ldots,N} R_{\xi_j,\widetilde{\tau}_j}(\bf f, m)$. From
the proof of \cite[Theorem 3.6]{cacoq} we know that for each pole
$\xi$ of order $\widetilde{\tau}$ of $f_k$ inside $D_{m_k}^*(f_k)$
\[ \limsup_{n \to \infty} \left
|Q_{n,\mathbf{m}}^{(j)}(\xi)\right|^{1/n}\le
\frac{|\xi|}{R_{m_k}^*(f_k)},\quad
j=0,1,\dots,\widetilde{\tau}-1.
\]
This contradicts \eqref{derivada2*}. Consequently
$R_{m_k}^*(f_k) = R^*_{|{\bf m}|,k}({\bf f},{\bf m})$ as claimed.

Due to \eqref{final}, we have also proved \eqref{inequality3}. In
order to show that this formula is exact for $\sigma$-regular
compact subsets one must argue as in the corresponding part of the
proof of \cite[Theorem 4.4]{cacoq}.
\end{proof}

As compared with \cite[Theorem 4.4]{cacoq}, Theorem \ref{saff3}
offers weaker assumptions and a characterization of the values
$R_{m_k}^*(f_k)$ in terms of the analytic properties of the
functions in the system  instead of the coefficients of their Taylor
expansion. An open question is to obtain an analogous
characterization  when the assumptions of Theorem \ref{saff3} do not
take place.

It would be interesting to study inverse problems for row
sequences of Hermite-Pad\'e approximation when only the limit
behavior of some of the zeros of the polynomials $Q_{n,{\bf m}}$
is known, in the spirit of the conjectures proposed by A.A. Gonchar
in \cite{gon2}.


\end{document}